\newcommand\restr[2]{{
  \left.\kern-\nulldelimiterspace 
  #1 
  \vphantom{\big|} 
  \right|_{#2} 
  }}
\DeclareMathOperator{\Ker}{Ker}
\DeclareMathOperator{\Id}{Id}
\newcommand{\jump}{\vskip 2mm}
\numberwithin{equation}{section}
\theoremstyle{plain}
\newtheorem{theorem}{Theorem}[section]
\newtheorem{proposition}[theorem]{Proposition}
\newtheorem{lemma}[theorem]{Lemma}
\newtheorem{corollary}[theorem]{Corollary}
\theoremstyle{definition}
\newtheorem{remark}{Remark}[section]
\newtheorem{example}{Example}
\newtheorem{algorithm}{Algorithm}
\numberwithin{theorem}{subsection}
\numberwithin{equation}{subsection}
\title[An algorithm for Hodge ideals]{An algorithm for Hodge ideals}
\author[G. Blanco]{Guillem Blanco}
\thanks{The author is supported by a Postdoctoral Fellowship of the Research Foundation -- Flanders}
\address{Department of Mathematics\\ KU Leuven,
Celestijnenlaan 200B, 3001 Leuven, Belgium.}
\email{guillem.blanco@kuleuven.be}
\begin{document}

\begin{abstract}
We present an algorithm to compute the Hodge ideals \cite{MP19, MP19b} of \( \mathbb{Q} \)-divisors associated to any reduced effective divisor \( D \). The computation of the Hodge ideals is based on an algorithm to compute parts of the \( V \)-filtration of Kashiwara and Malgrange on \( \iota_{+}\mathscr{O}_X(*D) \) and the characterization \cite{MP20b} of the Hodge ideals in terms of this \( V \)-filtration. In particular, this gives a new algorithm to compute the multiplier ideals and the jumping numbers of any effective divisor.
\end{abstract}

\maketitle

\section{Introduction}

Let \( X \) be a smooth complex variety of dimension \( n \) and let \( D \) be a reduced effective divisor on \( X \). Consider \( \mathscr{O}_X(*D ) \), the sheaf of meromorphic functions with poles along the divisor \( D \). This is a coherent left \( \mathscr{D}_X \)-module that underlies the mixed Hodge module \( j_* \mathbb{Q}_U^H[n] \), where \( U := X \setminus D \), \( j : U \hookrightarrow X \) is the open inclusion, and \( \mathbb{Q}_U^H[n] \) is the constant pure Hodge module on \( U \), see \cite{Sai88, Sai90}. Any \( \mathscr{D}_X \)-module associated to a mixed Hodge module carries a good filtration \( F_\bullet \), the Hodge filtration of the mixed Hodge module.

\jump

It is shown in \cite{Sai93} that the Hodge filtration on \( \mathscr{O}_X(*D) \) is contained in the pole order filtration, namely
\[ F_k \mathscr{O}_X(*D) \subseteq \mathscr{O}_X((k+1)D) \quad \textnormal{for all} \quad k \geq 0. \]
In order to study the Hodge filtration on \( \mathscr{O}_{X}(*D) \), Musta\c{t}\v{a} and Popa \cite{MP19} introduced a set of ideal sheaves, the Hodge ideals \( I_k(D)\) of \( D \), that are defined by
\[ F_k \mathscr{O}_X(*D) = I_k(D) \otimes_{\mathscr{O}_X} \mathscr{O}_X((k+1)D), \quad \textnormal{for all} \quad k \geq 0. \]

In a subsequent work, Musta\c{t}\v{a} and Popa \cite{MP19b} generalized the notion of Hodge ideals to arbitrary \( \mathbb{Q} \)-divisors. If \( f \in \mathscr{O}_X(X) \) is a global regular function, denote \( D := \textnormal{div}(f) \) and let \( Z \) be the support of \( D \). Then, for \( \alpha \in \mathbb{Q}_{>0} \) one can associate to this data a twisted version of the localization \( \mathscr{D}_X \)-module considered above, namely
\[ \mathcal{M}(f^{-\alpha}) := \mathscr{O}_X(*Z) f^{-\alpha}, \]
that is, a rank one free \( \mathscr{O}_X(*Z) \)-module with generator the formal symbol \( f^{-\alpha} \) and where the action of a derivation \( \partial \) of \( \mathscr{O}_X \) is
\[ \partial (g f^{-\alpha}) = \left(\partial (g) - g \frac{\alpha \partial(f)}{f}\right) f^{-\alpha}.\]
This \( \mathscr{D}_X \)-module can be endowed with a filtration \( F_k \mathcal{M}(f^{-\alpha}), k \geq 0, \) which makes it a filtered direct summand of a \( \mathscr{D} \)-module underlying a mixed Hodge module on \( X \), see \cite[\S 2]{MP19b}. Therefore, the Hodge ideals of the \( \mathbb{Q} \)-divisor \( \alpha D \) are defined by

\[ F_k \mathcal{M}(f^{-\alpha}) = I_k(\alpha D) \otimes_{\mathscr{O}_X} \mathscr{O}_X(kZ) f^{-\alpha}, \quad \textnormal{for all} \quad k \geq 0. \]

For \( \alpha = 1 \) one recovers the Hodge ideals \( I_k(D) \) of \( D \). Determining the Hodge ideals of a given divisor is a notoriously difficult problem. In the case that the divisor \( D \) has quasi-homogeneous isolated singularities, the Hodge ideals \( I_k(\alpha D) \) have been explicitly described by Zhang \cite{Zha18}. These results are extended to the semi-quasi-homogeneous and Newton non-degenerated cases in \cite{JKYS19}.

\vskip 2mm

For the case of a free divisor \( D \), the Hodge ideals \( I_k(D) \) are determined by Castaño Domínguez, Narváez Macarro and Sevenheck \cite{CNS19} via an algorithm. For the case of the determinant hypersurface, see the work of Perlman and Raicu \cite{PR21}. The first Hodge ideal of the \( \mathbb{Q} \)-divisor \( \alpha D \) coincides with the multiplier ideal \( \mathcal{J}((\alpha - \varepsilon) D), 0 < \varepsilon \ll 1 \), see \cite[Proposition 9.1]{MP19b}. Hence, the Hodge ideals can be seen as a generalization of multiplier ideals. There are algorithms to compute multiplier ideals due to Berkesch and Leykin \cite{BL10} and Shibuta \cite{Shi11} that rely on Gröbner basis techniques in the Weyl algebra.

\jump

In this work, we will use the characterization given in \cite{MP20b} of the Hodge ideals \( I_k(\alpha D) \) associated to a reduced effective divisor \( D \) in terms of the \( V \)-filtration of Kashiwara and Malgrange \cite{Kas83, Mal83}. Given the \( V \)-filtration \( V^\alpha \iota_{+} \mathscr{O}_X, \alpha \in \mathbb{Q} \cap (0, 1], \) on the \( \mathscr{D} \)-module theoretic direct image \( \iota_{+} \mathscr{O}_X \) of \( \mathscr{O}_X \), where \( \iota : X \rightarrow X \times \mathbb{C}, x \mapsto (x, f(x)), \) is the graph embedding of \( f\), \cref{alg1} will compute a set of generators for the \( \mathscr{O}_X \)-module
\[ G_p V^\alpha \iota_{+} \mathscr{O}_X = V^\alpha \iota_{+} \mathscr{O}_X \cap \bigoplus_{j=0}^p \mathscr{O}_X \partial_t^j = \bigg\{ \sum_{j = 0}^p v_j \otimes \partial_t^j \in V^\alpha \iota_{+} \mathscr{O}_X \ \Big|\ v_j \in \mathscr{O}_X \bigg\} \]
for a fixed \( p \in \mathbb{N} \) using Gröbner basis techniques in the Weyl algebra. After \cite[Theorem \(\textnormal{A}'\)]{MP20b}, this determines generators for the Hodge ideals \( I_k(\alpha D), k = 0, \dots, p \). Moreover, by a result of Budur and Saito \cite{BS05}, \cref{alg1} gives also a new procedure to compute the multiplier ideals and the (global) jumping numbers of any effective divisor, not necessarily reduced. Similarly to all general algorithms that depend on Gröbner basis computations the worst case complexity can be doubly exponential in the number of variables.

\jump

This work is organized as follows. In \cref{sec:v-filtration} we review the results related to the theory of \( V \)-filtrations of Kashiwara and Malgrange and the Bernstein-Sato polynomials that will be needed for the main algorithm. In \cref{sec:algorithm}, we present \cref{alg1} and we prove its correctness. Some non-trivial examples of Hodge ideals are included at the end. The algorithms described in this work have been implemented in the computer algebra system \textsc{Singular} \cite{Sing}.

\subsection*{Acknowledgments} The author thanks Michael Perlman for reporting incorrect results of the algorithms as presented in the published version of this paper and for helpful conversations on addressing the issue. For certain examples, the Hodge ideals were shifted due to a mistake in the computation of roots multiplicities of certain \(b\)-functions, see \cref{rmk:errata}.

\section{The \texorpdfstring{\( V \)}{V}-filtration of Kashiwara and Malgrange} \label{sec:v-filtration}

Let \( X \) be a smooth complex variety of dimension \( n \) with structure sheaf \( \mathscr{O}_X \). Let \( \mathscr{D}_X \) denote the sheaf of differential operators on \( X \). The \( V \)-filtration of Kashiwara \cite{Kas83} and Malgrange \cite{Mal83} on a \( \mathscr{D}_X \)-module is defined with respect to a closed subvariety \( Z \subset X \). Through this work we will assume that \( Z \) is a codimension one subvariety globally defined by a regular function \( f \in \Gamma(X, \mathscr{O}_X) \).

\subsection{The smooth case}
When \( Z \) is a smooth subvariety, the \( V \)-filtration on \( \mathscr{D}_X \) along \( Z \) is defined by
\[ V^k \mathscr{D}_X = \{ P \in \mathscr{D}_X \ |\ P \cdot (f)^i \subset (f)^{i + k}\ \textnormal{for all}\ i \in \mathbb{Z}\}   \]
with \( k \in \mathbb{Z} \) and \( (f)^i = \mathscr{O}_X \) for \( i \leq 0 \).

\jump

Since the pair \((X, Z) \) is smooth, one can consider local algebraic coordinates of the form \( x_1, \dots, x_{n-1}, t = f \). Therefore, \( V^k \mathscr{D}_X \) is locally generated over \( \mathscr{O}_X \) by
\[ \prod_{1 \leq i \leq n - 1} \hspace{-9pt} \partial_{x_i}^{\alpha_i} \ t^\nu \partial_t^\mu, \quad \textnormal{with} \quad \nu - \mu \geq k. \]
From this, the \( V \)-filtration on \( \mathscr{D}_X \) along \( Z \) is then an exhaustive decreasing filtration satisfying \( V^i \mathscr{D}_X \cdot V^j \mathscr{D}_X \subseteq V^{i + j} \mathscr{D}_X \) with equality for \( i, j \geq 0 \). In the sequel, \( t \) will always denote a local equation for \( Z \) and \( \partial_t \) a local vector field such that \( [\partial_t, t] = 1 \).

\jump

In general, a \( V \)-filtration on a coherent left \( \mathscr{D}_X \)-module \( \mathcal{M} \) along \( Z \subset X \) is a rational filtration \( (V^\alpha \mathcal{M})_{\alpha \in \mathbb{Q}} \) that is exhaustive, decreasing, discrete and left continuous, such that the following conditions are satisfied:

\begin{enumerate}[i)]
\item Each \( V^\alpha \mathcal{M} \) is a coherent module over \( V^0\mathscr{D}_{X} \).
\item For every \( \alpha \in \mathbb{Q} \), there is an inclusion
\[ t \cdot V^\alpha \mathcal{M} \subseteq V^{\alpha + 1} \mathcal{M} \]
with equality for \( \alpha > 0 \).
\item For every \( \alpha \in \mathbb{Q} \), one has
\[ \partial_t \cdot V^\alpha \mathcal{M} \subseteq V^{\alpha-1} \mathcal{M}. \]
\end{enumerate}
In particular, \( V^i \mathscr{D}_X \cdot V^\alpha \mathcal{M} \subseteq V^{\alpha + i} \mathcal{M} \).
\begin{enumerate}[iv)]
\item For every \( \alpha \in \mathbb{Q} \), the action of \( \partial_t t - \alpha \) is nilpotent on
\[ \textnormal{Gr}_{V}^\alpha := V^\alpha \mathcal{M} / V^{>\alpha} \mathcal{M} \]
where \( V^{> \alpha} \mathcal{M} := \bigcup_{\alpha' > \alpha} V^{\alpha'} \mathcal{M} \).
\end{enumerate}

All conditions are independent of the choice of the local coordinate \( t \). In case such a filtration exists then it is necessarily unique, see \cite[Lemme 3.1.2]{Sai88}. Under reasonable assumptions for the \( \mathscr{D}_X \)-module \( \mathcal{M} \), \( V \)-filtrations do exist.

\begin{theorem}[\cite{Kas83, Mal83}]
Let \( \mathcal{M} \) be a regular holonomic \( \mathscr{D}_X \)-module with quasi-unipotent monodromy around \( Z \). Then, \( \mathcal{M} \) admits a \( V \)-filtration along \( Z \).
\end{theorem}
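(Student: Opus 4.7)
Since \cite[Lemme 3.1.2]{Sai88} already gives uniqueness of the \( V \)-filtration, the task is to produce one. My plan is to work locally, build the filtration out of a single coherent \( V^0\mathscr{D}_X \)-lattice \( \mathcal{M}_0\subseteq\mathcal{M} \), and use a Bernstein--Sato polynomial as the tool that decomposes \( \mathcal{M}_0 \) into generalized eigenspaces of \( s=\partial_t t \). The four axioms will then be read off from this decomposition, and uniqueness will allow the local constructions to be glued into a global filtration.

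The first step is local reduction. Working in coordinates \( x_1,\dots,x_{n-1},t \) with \( t=f \), the filtration \( V^\bullet\mathscr{D}_X \) has the explicit form recalled in the excerpt. Since \( \mathcal{M} \) is coherent over \( \mathscr{D}_X \) and \( V^0\mathscr{D}_X \) is noetherian, I would choose a finite set of local sections of \( \mathcal{M} \) and let \( \mathcal{M}_0 \) be the coherent \( V^0\mathscr{D}_X \)-submodule they generate, large enough that \( \mathscr{D}_X\mathcal{M}_0=\mathcal{M} \).

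The crucial step is producing a nonzero polynomial \( b(s)\in\mathbb{C}[s] \) with
\[ b(\partial_t t)\,\mathcal{M}_0 \subseteq t\cdot V^0\mathscr{D}_X\cdot\mathcal{M}_0 \]
and with all roots in \( \mathbb{Q} \). Regular holonomicity of \( \mathcal{M} \) forces the existence of some such \( b(s) \) via Kashiwara's theorem \cite{Kas76}, while the quasi-unipotent monodromy hypothesis guarantees that the roots are rational. This is the main obstacle, and essentially the only place where the two hypotheses on \( \mathcal{M} \) enter in an essential way.

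Once \( b(s) \) is in hand, the filtration is built by hand. Factor \( b(s)=\prod_j(s+\alpha_j)^{m_j} \) with \( \alpha_j\in\mathbb{Q} \), and choose \( \alpha_0 \) so that all \( -\alpha_j \) lie in \( [\alpha_0,\alpha_0+1) \). Set \( V^{\alpha_0}\mathcal{M}:=\mathcal{M}_0 \); for \( \alpha\in(\alpha_0,\alpha_0+1] \), define \( V^\alpha\mathcal{M} \) to be the preimage in \( \mathcal{M}_0 \) of the sum of the generalized \( \beta \)-eigenspaces of \( \partial_t t \) acting on \( \mathcal{M}_0/t\mathcal{M}_0 \) with \( \beta\geq\alpha \); and propagate by \( V^{\alpha+1}\mathcal{M}:=t\cdot V^\alpha\mathcal{M} \) for \( \alpha>\alpha_0 \) and \( V^{\alpha-1}\mathcal{M}:=V^\alpha\mathcal{M}+\partial_t\cdot V^\alpha\mathcal{M} \). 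Nilpotency of \( \partial_t t-\alpha \) on \( \textnormal{Gr}_V^\alpha \) is then built into the definition, and the remaining axioms---\( V^0\mathscr{D}_X \)-coherence and the equality \( t\cdot V^\alpha\mathcal{M}=V^{\alpha+1}\mathcal{M} \) for \( \alpha>0 \)---reduce, via the Bernstein--Sato relation, to checking injectivity of \( t \) and surjectivity of \( \partial_t \) on the appropriate graded pieces, which is a standard computation from the shape of \( b \).
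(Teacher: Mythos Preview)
The paper does not prove this theorem at all: it is stated with attribution to \cite{Kas76, Mal83} and used as a black box. So there is no ``paper's own proof'' to compare against; what you have written is a sketch of the classical Malgrange--Kashiwara construction rather than a reconstruction of anything in this article.

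As a sketch of that classical argument your outline is broadly on the right track, but there is one genuine gap. You write ``choose \( \alpha_0 \) so that all \( -\alpha_j \) lie in \( [\alpha_0,\alpha_0+1) \)''. For an arbitrary coherent \( V^0\mathscr{D}_X \)-lattice \( \mathcal{M}_0 \) this is generally impossible: the roots of the \( b \)-function of a lattice can spread over an interval of length greater than one. The standard remedy, which your sketch omits, is an iterative refinement of the lattice. One shows that if \( \beta \) is an extremal root of \( b \), then replacing \( \mathcal{M}_0 \) by the sublattice generated by \( t\mathcal{M}_0 \) together with the generalized \( \beta' \)-eigenspaces of \( \partial_t t \) on \( \mathcal{M}_0/t\mathcal{M}_0 \) for \( \beta'\neq\beta \) (or the analogous enlargement using \( \partial_t \)) shifts that root by \( \pm 1 \) while leaving the others fixed. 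Repeating this finitely many times produces a lattice whose \( b \)-function has all roots in a prescribed half-open unit interval, and only then does your eigenspace decomposition of \( \mathcal{M}_0/t\mathcal{M}_0 \) give the correct \( V^\alpha \) on the fundamental domain. Without this normalization step, the filtration you define need not satisfy the equality \( t\cdot V^\alpha\mathcal{M}=V^{\alpha+1}\mathcal{M} \) for \( \alpha>0 \), because pieces of \( \mathcal{M}_0/t\mathcal{M}_0 \) with eigenvalue differing by an integer get conflated.

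A smaller point: the sentence ``Regular holonomicity of \( \mathcal{M} \) forces the existence of some such \( b(s) \) via Kashiwara's theorem \cite{Kas76}'' inverts the roles of the two hypotheses. Holonomicity alone already gives existence of a \( b \)-function for any lattice; what regularity together with quasi-unipotent local monodromy buys you (via comparison with the monodromy on nearby cycles) is the rationality of its roots.
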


\jump

\subsection{The graph embedding}
In general, when \( Z \) is singular, one reduces to the smooth case using the graph embedding of \( f \). Namely, let
\[ \iota : X \longrightarrow X \times \mathbb{C}, \quad x \mapsto (x, f(x)) \]
be the closed embedding defined by the graph of \( f \). Define \( Y := X \times \mathbb{C} \) and let \( t \) be the projection on the second factor of \( Y \). This way, \( (t=0) \) is the smooth hypersurface \( X \times \{0\} \) in \( Y \). Given now \( \mathcal{M} \) a \( \mathscr{D}_{X} \)-module, we can consider the \( \mathscr{D} \)-module theoretic direct image by the graph embedding
\[ \iota_{+} \mathcal{M} := \mathcal{M} \otimes_{\mathbb{C}} \mathbb{C}[\partial_t] \]
with the left \( \mathscr{D}_{X \times \mathbb{C}} \)-action given as follows. Let \( m \) be a section of \( \mathcal{M} \),
\begin{enumerate}[i)]
\item \( g \cdot (m \otimes \partial_t^j) = gm \otimes \partial_t^j \),\ for \( g \) a section of \( \mathscr{O}_X \).
\item \( t \cdot (m \otimes \partial_t^j) = fm \otimes \partial_t^j - j m \otimes \partial_t^{j-1} \).
\item \( \partial_t \cdot (m \otimes \partial_t^j) = m \otimes \partial_t^{j+1} \).
\item \( D(m \otimes \partial_t^j) = D(m) \otimes \partial_t^j - D(f)m \otimes \partial_t^{j+1} \),\ for \( D \) a section of \(\textnormal{Der}(\mathscr{O}_X) \),
\end{enumerate}
see for instance \cite[Example 1.3.5]{HTT08}.

\jump

In this situation one can consider the \( V \)-filtration on \( \iota_+ \mathcal{M} \) along \( X \times \{0\} = Z \). Then, define \( V^\bullet \mathcal{M} := \mathcal{M} \cap V^\bullet \iota_+ \mathcal{M} \) for the \( V \)-filtration on \( \mathcal{M} \) along \( Z \). Notice also that in this case \( V^0 \mathscr{D}_Y \) is just \( \mathscr{D}_X \langle t, t \partial_t \rangle \).

\jump

For the case of the \( \mathscr{D}_X \)-module \( \mathscr{O}_{X} \) one has the following alternative description of \( \iota_+ \mathscr{O}_X \). There is an isomorphism of \( \mathscr{D}_{Y} \)-modules
\[ \iota_{+} \mathscr{O}_{X} \cong \mathscr{O}_X[t]_{f-t}/ \mathscr{O}_X[t]. \]
Indeed, denoting by \( \delta \) the class of \( \frac{1}{f-t} \) in \(\mathscr{O}_X[t]_{f-t}/ \mathscr{O}_X[t]\), any section can be uniquely written as
\[ \sum_{j \geq 0} h_j \partial^j_t \delta, \]
with \( h_j \) being sections of \( \mathscr{O}_{X} \) and only finitely many terms being non-zero. Any such section can be identified with \( \sum_{j \geq 0} h_j \otimes \partial_t^j \) and one can check that the \( \mathscr{D}_Y \)-action coincide. Notice that by definition one has \( f \delta = t \delta \).

\jump

Given an arbitrary \( \mathscr{D}_{X} \)-module \( \mathcal{M} \), one recovers the original definition of \( \iota_+ \mathcal{M} \) via the following isomorphism of \( \mathscr{D}_{Y} \)-modules
\[ \iota_{+} \mathcal{M} \cong \mathcal{M} \otimes_{\mathscr{O}_X} \iota_{+} \mathscr{O}_{X} = \bigoplus_{j \geq 0} \mathcal{M} \otimes_{\mathscr{O}_X} \mathscr{O}_{X} \partial_t^j \delta. \]
This description of \( \iota_{+} \mathcal{M} \) leads to the following increasing and exhaustive filtration of \( \mathscr{O}_X \)-modules on \( \iota_{+} \mathcal{M} \) that will be useful in the sequel,
\[ G_k \iota_{+} \mathcal{M} := \bigoplus_{j = 0}^k \mathcal{M} \otimes_{\mathscr{O}_X} \mathscr{O}_X \partial_t^j \delta,  \]
with \( G_k \iota_{+} \mathcal{M} / G_{k - 1} \iota_{+} \mathcal{M} \cong \mathcal{M} \) as \( \mathscr{O}_{X} \)-modules.

\jump

\subsection{Hodge ideals and the \texorpdfstring{\( V \)}{V}-filtration}
The Hodge ideals of a reduced divisor \( D = \textnormal{div}(f) \) can be described in terms of the \( V \)-filtration on \( \iota_+ \mathscr{O}_{X} \). Before presenting the main result from \cite{MP20b} that we will use, it is convenient to define the following polynomials,
\[ Q_i(X) = X(X + 1) \cdots (X + i -1) \in \mathbb{Z}[X].  \]

\begin{theorem}[{\cite[Theorem \(\textnormal{A}'\)]{MP20b}}] \label{thm:mustata-popa}
If \( D \) is a reduced divisor, then for every positive rational number \( \alpha \), and every \( p \geq 0 \), one has
\[ I_p(\alpha D) = \left\{ \sum_{j = 0}^p Q_j(\alpha) f^{p-j} v_j\ \bigg|\ \sum_{j = 0}^p v_j \partial_t^j \delta \in V^\alpha \iota_+ \mathscr{O}_{X} \right\}. \]
\end{theorem}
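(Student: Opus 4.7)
My plan is to characterize $F_p\mathcal{M}(f^{-\alpha})$ through the $V$-filtration on $\iota_+\mathscr{O}_X(*D)$, by invoking M. Saito's strict compatibility of the Hodge filtration with the $V$-filtration for a $\mathscr{D}$-module underlying a mixed Hodge module. The structural input I would use is that, for $\alpha\in(0,1]$, the mixed Hodge module underlying $\mathcal{M}(f^{-\alpha})$ appears as a direct summand of the nearby-cycles piece $\mathrm{Gr}_V^\alpha \iota_+\mathscr{O}_X(*D)$; the case $\alpha>1$ is then bootstrapped from this using the isomorphism $t\colon\mathrm{Gr}_V^\alpha\xrightarrow{\sim}\mathrm{Gr}_V^{\alpha+1}$, which is compatible with the Hodge filtrations up to a predictable index shift. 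This reduces the problem to identifying $F_pV^\alpha\iota_+\mathscr{O}_X(*D)$ explicitly.

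Next, I would compute the Hodge filtration on $\iota_+\mathscr{O}_X(*D)$ using the direct-image formula for a closed embedding of codimension one, combined with the already-known identity $F_k\mathscr{O}_X(*D)=I_k(D)\otimes\mathscr{O}_X((k+1)D)$ for the Hodge ideals of the reduced divisor $D$. This gives an explicit description of $F_p\iota_+\mathscr{O}_X(*D)$ as a sum of shifted $I_k(D)$'s tensored with powers of $\mathscr{O}_X(D)$ and multiplied by $\partial_t^j\delta$, which can then be intersected with $V^\alpha$ and with the un-localized subsheaf $\iota_+\mathscr{O}_X$ truncated at level $p$.

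The combinatorial coefficients $Q_j(\alpha)$ should emerge from the identity
\[ \partial_t^{\,j}\,(f-t)^{-\alpha}\;=\;Q_j(\alpha)\,(f-t)^{-\alpha-j}, \]
which, interpreted via the identification $\delta=(f-t)^{-1}$, describes how $\partial_t^j\delta$ pairs with the twisted generator $f^{-\alpha}\delta\in V^\alpha\iota_+\mathscr{O}_X(*D)$. The accompanying weight $f^{p-j}$ is forced by the normalization $F_p\mathcal{M}(f^{-\alpha})=I_p(\alpha D)\otimes\mathscr{O}_X(pZ)f^{-\alpha}$, which shifts the pole-order filtration down by $p$. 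Thus, for $u=\sum_{j=0}^p v_j\partial_t^j\delta\in V^\alpha\iota_+\mathscr{O}_X$, the projection to $\mathcal{M}(f^{-\alpha})$ is, up to the unit $f^{-\alpha-p}$, the combination $\sum_j Q_j(\alpha)f^{p-j}v_j$, and membership in $F_p$ translates into membership in $I_p(\alpha D)$.

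The main obstacle I anticipate is the strictness statement itself: making sure that intersecting the Hodge filtration on $\iota_+\mathscr{O}_X(*D)$ with $V^\alpha$ and then with the un-localized piece $G_p\iota_+\mathscr{O}_X$ recovers exactly $F_p\mathcal{M}(f^{-\alpha})$ without extra contributions or losses. This is precisely where one needs to exploit the fact that the filtered $\mathscr{D}$-module comes from a mixed Hodge module, so that Saito's theorem on the compatibility of $F_\bullet$ and $V^\bullet$ yields the required strictness. Secondary difficulties include tracking the index shifts in the direct-image formula, and verifying that the $\alpha>1$ case obtained by iterating multiplication by $t$ on $\mathrm{Gr}_V^{\alpha-1}$ produces the same combinatorial identity with $Q_j(\alpha)$ as for $\alpha\in(0,1]$.
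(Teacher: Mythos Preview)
This theorem is not proved in the paper under review: it is quoted verbatim from \cite{MP20b} (as Theorem~$\textnormal{A}'$ there) and used as a black box to justify \cref{alg1}. Consequently there is no proof in this paper to compare your proposal against.

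That said, your sketch is broadly in line with the argument in \cite{MP20b}: the essential inputs are indeed Saito's strict compatibility between the Hodge and \(V\)-filtrations for filtered \(\mathscr{D}\)-modules underlying mixed Hodge modules, the explicit direct-image formula for \(F_\bullet\iota_+\mathscr{O}_X(*D)\) under the graph embedding, and the combinatorics that produce the factors \(Q_j(\alpha)\). One point where your outline is slightly off is the identification of \(\mathcal{M}(f^{-\alpha})\) with a direct summand of \(\mathrm{Gr}_V^\alpha\iota_+\mathscr{O}_X(*D)\): the latter is a \(\mathscr{D}\)-module supported on the central fibre, whereas \(\mathcal{M}(f^{-\alpha})\) lives on \(X\). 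The route taken in \cite{MP20b} instead passes through \(\iota_+\mathcal{M}(f^{-\alpha})\) and the isomorphism \(\Psi\) of \cref{prop:bijection} (which is quoted in the present paper precisely for this purpose); the coefficients \(Q_j(\alpha)\) then arise from \(\Psi(u\otimes\partial_t^j\delta)=\frac{u}{f^j}Q_j(-s)f^s\) specialised at \(s=-\alpha\), rather than from differentiating \((f-t)^{-\alpha}\) directly.
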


If one defines \( G_k V^\alpha \iota_+\mathscr{O}_X := G_k \iota_+ \mathscr{O}_X \cap V^\alpha \iota_+ \mathscr{O}_X \) for all \( \alpha \in \mathbb{Q} \), then in order to get generators for the Hodge ideals \( I_k(\alpha D), k = 0, \dots, p, \) it is enough to compute a set of generators for the \( \mathscr{O}_X \)-module \( G_p V^\alpha \iota_+ \mathscr{O}_X \) since \( I_k(\alpha D) \) only depends on the \( \mathscr{O}_X \)-module structure of \( G_p V^\alpha \iota_{+} \mathscr{O}_X \) when \( k = 0, \dots, p \).

\jump

The Hodge ideals \( I_p(\alpha D) \) are a generalization of the multiplier ideals \( \mathcal{J}(\alpha D), \alpha \in \mathbb{Q}_{>0} \) that usually appear in the context of birational geometry, see \cite[\S III]{Laz04-2}.  It is a result of Budur and Saito \cite{BS05} that multiplier ideals have an interpretation in terms of \( \mathscr{D}_X \)-modules via the \( V \)-filtration on \( \mathscr{O}_X \) along \( D \).

\begin{theorem}[{\cite[Thm 0.1]{BS05}}] \label{thm:budur-saito}
Let \( D = \textnormal{div}(f) \) an effective divisor on \( X \), then
\[ \mathcal{J}(\alpha D) = V^{\alpha + \epsilon} \mathscr{O}_X \quad \textnormal{for} \quad 0 < \epsilon \ll 1. \]
\end{theorem}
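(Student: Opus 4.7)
The plan is to reduce the identity to the simple normal crossings case via a log resolution of $(X,D)$, perform both computations explicitly upstairs, and descend to $X$ using the strictness of proper direct images of mixed Hodge modules with respect to the $V$-filtration.

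First I would take a log resolution $\pi \colon \widetilde{X} \to X$ of $(X,D)$, so that $\pi^{*}D = \sum_{i} a_i E_i$ has simple normal crossing support. On the multiplier-ideal side, Nadel's formula
\[ \mathcal{J}(\alpha D) = \pi_{*}\mathscr{O}_{\widetilde{X}}\bigl(K_{\widetilde{X}/X} - \lfloor \alpha\, \pi^{*}D\rfloor\bigr), \]
together with the local vanishing $R^{i}\pi_{*}\mathscr{O}_{\widetilde{X}}(K_{\widetilde{X}/X} - \lfloor \alpha\, \pi^{*}D\rfloor) = 0$ for $i > 0$, reduces the computation of $\mathcal{J}(\alpha D)$ to an assertion on $\widetilde{X}$ involving an SNC divisor.

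Next I would match this locally with the $V$-filtration upstairs. In SNC coordinates $y_1,\dots,y_n$ with $f\circ\pi = u \prod_i y_i^{a_i}$ (and $u$ a unit), every element of $\iota'_{+}\mathscr{O}_{\widetilde{X}}$, where $\iota'$ is the graph embedding of $f\circ\pi$, is uniquely written as $\sum_{j} h_j \partial_t^{j}\delta$, and the relations $f\delta = t\delta$ and $[\partial_t, t] = 1$ let one compute the action of $t\partial_t$ on monomials $y^{b} \partial_{t}^{j}\delta$ in closed form. Solving the eigenvalue equation for $\mathrm{Gr}_{V}^{\alpha}$ modulo lower-order pieces produces $V^{\alpha+\epsilon}\mathscr{O}_{\widetilde{X}}$ as an explicit monomial ideal, and a direct comparison of floors across the components $E_i$ shows that it coincides with $\mathscr{O}_{\widetilde{X}}(-\lfloor \alpha\, \pi^{*}D\rfloor)$ locally.

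Finally, I would descend to $X$. Since $\mathscr{O}_X(*D)$ underlies the mixed Hodge module $j_{*}\mathbb{Q}_{U}^{H}[n]$, Saito's theory guarantees that the proper direct image $(\pi\times\mathrm{id})_{+}$ is strict with respect to the $V$-filtration along $X\times\{0\}$, so the upstairs equality combines with Nadel's formula and the local vanishing to yield $\mathcal{J}(\alpha D) = V^{\alpha+\epsilon}\mathscr{O}_{X}$. The hard part will be this last step: the strictness of $V$-filtrations under proper direct image of a mixed Hodge module lies at the technical core of Saito's theory, and one must carefully reconcile the twist by $K_{\widetilde{X}/X}$ appearing in Nadel's formula with the direct-image description of the $V$-filtration, in particular tracking how the shift $\alpha\mapsto\alpha+\epsilon$ interacts with the rounding of $\alpha\, \pi^{*}D$ at the jumping values.
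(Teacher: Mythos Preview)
The paper does not prove this statement; it is quoted verbatim as \cite[Thm~0.1]{BS05} and used as a black box. There is therefore no ``paper's own proof'' to compare against. Your outline is essentially the strategy of the original Budur--Saito argument: reduce to the simple normal crossings case via a log resolution, compute the $V$-filtration explicitly there, and push down using the compatibility of $V$-filtrations with proper direct images in Saito's theory. As a sketch it is sound, and you have correctly flagged the genuinely delicate step (reconciling the relative canonical twist in Nadel's formula with the direct-image behaviour of the $V$-filtration). If you want to flesh this out, the original paper of Budur and Saito is the right place to look; the present paper adds nothing to that argument.
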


Indeed, \( I_0(\alpha D) = \mathcal{J}((\alpha - \epsilon) D) \) for \( 0 < \epsilon \ll 1 \), see \cite[Proposition 9.1]{MP19b} and \cref{thm:mustata-popa} can be seen as a generalization of \cref{thm:budur-saito} for the case of reduced divisors.

\subsection{The Bernstein-Sato polynomial}

In order to study the \( V \)-filtration on the \( \mathscr{D}_{Y} \)-module \( \iota_+ \mathscr{O}_{X} \) it is convenient to work on a bigger \( \mathscr{D}_{Y} \)-module where multiplication by \( f \) is bijective, namely \( \iota_+ \mathscr{O}_{X}(* D) \), where \( D = \textnormal{div}(f) \).

\jump

Under the hypothesis that \( f \) acts bijectively on a \( \mathscr{D}_X \)-module \( \mathcal{M} \), that is, \( \mathcal{M} \) has structure of \( \mathscr{O}_X(*D) \)-module, one can show that multiplication by \( t \) is bijective in \( \iota_+ \mathcal{M} \). Hence, for the particular case of \( \mathscr{O}_X \),
\begin{equation} \label{eq:localization}
(\iota_+ \mathscr{O}_X)_t = \iota_+ \mathscr{O}(*D).
\end{equation}

Denote by \( \mathcal{M}_f \) the cyclic \( \mathscr{D}_X[s] \)-submodule of \( \mathscr{O}_X[f^{-1}, s]f^s \) generated by \( f^s \), that is
\[ \mathcal{M}_f := \mathscr{D}_{X}[s]f^s \subseteq \mathscr{O}_{X}[f^{-1}, s]f^s \cong \iota_{+} \mathscr{O}_{X}(* D), \]
where in the last isomorphism the symbolic power \( f^s \) is naturally identified with \( \delta \) and \( s \) acts as \( - \partial_t t \). Notice that, since \( (\partial_t t)^m \delta \) is a section of \( \iota_{+} \mathscr{O}_X \) for all \( m \in \mathbb{N} \), we have the inclusion \( \mathcal{M}_f \subseteq \iota_+ \mathscr{O}_X \).

\jump

The action of \( t \) in \( \mathscr{O}_X[f^{-1}, s]f^s \) is given by the automorphism \( s \mapsto s + 1 \). Since we have the relation \( t s = (s + 1) t \), multiplication by \( t \) leaves invariant \( \mathcal{M}_f \), that is \( t \cdot \mathcal{M}_f \cong \mathscr{D}_{X}[s]f \cdot f^s \subseteq \mathcal{M}_f\). Therefore, \( \mathcal{M}_f \) is in fact a \( \mathscr{D}_X \langle t, s \rangle \)-module. In addition, since \( t^i \partial_t^j \delta = \prod_{k = 1}^j (\partial_t t - (i - k +1)) t^{i - j} \in \mathscr{D}_X[\partial_t t] \delta = \mathcal{M}_f \), one sees that
\begin{equation} \label{eq:localization-t}
(\mathcal{M}_f)_t = (\iota_+ \mathscr{O}_X)_t.
\end{equation}

After Equations \ref{eq:localization} and \ref{eq:localization-t}, it will be convenient to define the following increasing and exhaustive filtration \( T_k \) of \( \mathscr{D}_{X} \langle t, s \rangle \)-modules on \( \iota_{+} \mathscr{O}_X(* D) \) by
\[ T_k(\mathscr{O}_X[f^{-1}, s]f^s) := t^{-k} \mathcal{M}_f = \mathscr{D}_X[s] f^{s-k}. \]

For a general \( \mathscr{D}_X \)-module, one has the following relation that will be applied to \( \mathcal{M} = \mathscr{O}_X(*D) \) later on.

\begin{proposition}[{\cite[Proposition 2.5]{MP20b}}] \label{prop:bijection}
If \( \mathcal{M} \) is a \( \mathscr{D}_{X} \)-module on which \( f \) acts bijectively, then we have an isomorphism of \( \mathscr{D}_{X}\langle t, t^{-1}, s \rangle \)-modules
\[ \Phi : \mathcal{M}[s] f^s \xrightarrow[\quad]{\cong} \iota_{+} \mathcal{M}, \quad u s^j f^s \mapsto u \otimes (-\partial_t t)^j \delta. \]
The inverse isomorphism \( \Psi \) is given by
\[ u \otimes \partial_t^j \delta \mapsto \frac{u}{f^j} Q_j(-s) f^s. \]
\end{proposition}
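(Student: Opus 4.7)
The plan is to verify that $\Phi$ is a well-defined $\mathscr{D}_X\langle t, t^{-1}, s\rangle$-module homomorphism, and then to exhibit $\Psi$ as its two-sided inverse via a key operator identity in $\mathscr{D}_Y$.

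First I would equip $\mathcal{M}[s]f^s$ with its natural $\mathscr{D}_X\langle t, t^{-1}, s\rangle$-module structure: the variable $s$ acts by polynomial multiplication; a derivation $D$ of $\mathscr{O}_X$ acts by Leibniz through the formal rule $D(f^s) = s\,D(f) f^{-1} f^s$, which makes sense since $f$ is bijective on $\mathcal{M}$; and $t$ acts by $t \cdot u\, g(s) f^s = f u\, g(s+1) f^s$, with $t^{-1}$ as its inverse. A direct check of the commutation relations (notably $[t,s] = t$) shows the structure is well-defined.

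Next I would verify that $\Phi$, defined by $u s^j f^s \mapsto u \otimes (-\partial_t t)^j \delta$ and extended $\mathbb{C}$-linearly, is $\mathscr{D}_X\langle t, t^{-1}, s\rangle$-linear. The $\mathscr{O}_X$- and $s$-linearities are immediate. For $t$-equivariance, one uses $t (-\partial_t t)^j = (-\partial_t t + 1)^j\, t$ in $\mathscr{D}_Y$ together with $t \delta = f \delta$ (which follows from $(f - t)\delta = 0$ in $\iota_+ \mathscr{O}_X$). The delicate verification is the one under derivations: Leibniz on the source produces an extra term $D(f) f^{-1} u \otimes (-\partial_t t)^{j+1} \delta$, while on the target the corresponding contribution is $-D(f)\, u \otimes \partial_t (-\partial_t t + 1)^j \delta$, coming from $D(\delta) = -D(f)\partial_t \delta$ together with the commutation $s^j \partial_t = \partial_t(s+1)^j$. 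These two expressions match after rewriting $(-\partial_t t)^{j+1}\delta = -f\, \partial_t (-\partial_t t + 1)^j \delta$ (using $ts^j = (s+1)^j t$ and $t\delta = f\delta$) and pulling $f^{-1}$ across the $\mathscr{O}_X$-tensor.

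The bijectivity of $\Phi$ then reduces to the operator identity $\partial_t^j t^j = Q_j(-s)$ in $\mathscr{D}_Y$. This is proved by induction on $j$ using $\partial_t t = t\partial_t + 1$ and $(t\partial_t + i)\, t = t(t\partial_t + i + 1)$, giving $\partial_t^j t^j = (t\partial_t + 1)(t\partial_t + 2)\cdots(t\partial_t + j) = Q_j(-s)$, since $t\partial_t = -s - 1$. Since $t$ is bijective on $\iota_+ \mathcal{M}$ (as $f$ is bijective on $\mathcal{M}$) with $t^{-j}\delta = f^{-j}\delta$, this yields $\partial_t^j \delta = f^{-j} Q_j(-s) \delta$, forcing the candidate $\Psi(u \otimes \partial_t^j \delta) = u f^{-j} Q_j(-s) f^s$. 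Then $\Phi \circ \Psi = \mathrm{id}$ follows from $\Phi(u f^{-j} Q_j(-s) f^s) = u f^{-j} \otimes Q_j(\partial_t t) \delta = u f^{-j} \otimes f^j \partial_t^j \delta = u \otimes \partial_t^j \delta$, and $\Psi \circ \Phi = \mathrm{id}$ from $s$-linearity together with $\Psi(\delta) = f^s$. The main technical obstacle is the derivation-equivariance check; the rest is essentially bookkeeping around the identity $\partial_t^j t^j = Q_j(-s)$.
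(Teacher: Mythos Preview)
The paper does not give its own proof of this proposition; it is quoted verbatim from \cite[Proposition 2.5]{MP20b}. Your argument is the natural one and is essentially correct: check that $\Phi$ respects the $\mathscr{D}_X\langle t,t^{-1},s\rangle$-structure, establish the identity $\partial_t^{\,j}t^{\,j}=Q_j(-s)$ in $\mathscr{D}_Y$, and use $t^{\,j}\delta=f^{\,j}\delta$ together with $[\partial_t,f]=0$ to obtain $\Phi\circ\Psi=\mathrm{id}$.

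The only place that needs one more line is the verification of $\Psi\circ\Phi=\mathrm{id}$. As written you invoke ``$s$-linearity of $\Psi$'', but you have only defined $\Psi$ on the $\mathcal{M}$-basis $\{\partial_t^{\,j}\delta\}$ and have not separately checked that it commutes with the action of $s=-\partial_t t$; so this step is circular. The cleanest repair is to bypass $\Psi$ for this direction and argue injectivity of $\Phi$ directly: since $(-\partial_t t)^{j}\delta=(-1)^{j}f^{\,j}\partial_t^{\,j}\delta+(\text{terms in }\partial_t^{\,i}\delta,\ i<j)$, the family $\{(-\partial_t t)^{j}\delta\}_{j\ge 0}$ is related to the $\mathcal{M}$-basis $\{\partial_t^{\,j}\delta\}_{j\ge 0}$ of $\iota_+\mathcal{M}$ by a triangular matrix with diagonal entries $(-f)^{j}$, which are invertible because $f$ acts bijectively on $\mathcal{M}$. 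Hence $\Phi$ carries the $\mathcal{M}$-basis $\{s^{j}f^{s}\}$ to an $\mathcal{M}$-basis, so it is bijective, and $\Psi=\Phi^{-1}$ follows from $\Phi\circ\Psi=\mathrm{id}$.
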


This setting leads to one of the fundamental objects in the theory of \( \mathscr{D} \)-modules.

\begin{proposition}[\cite{Ber72}] \label{prop:BS}
The action of \( s \) induces an endomorphism
\[ s : \mathcal{M}_f / t \mathcal{M}_f \longrightarrow \mathcal{M}_f / t \mathcal{M}_f, \]
which has a minimal polynomial equal to the Bernstein-Sato polynomial \( b_{f}(s) \) of \( f \).
\end{proposition}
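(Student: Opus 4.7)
The plan is to unpack the definition of $b_f(s)$ and show it coincides both with the minimal polynomial of $s$ on the cyclic class $[f^s] \in \mathcal{M}_f / t\mathcal{M}_f$ and with the minimal polynomial on the full quotient. First I would pin down $t\mathcal{M}_f$ explicitly. Under the identification $\delta = f^s$, the relation $t\delta = f\delta$ gives $t\cdot f^s = f\cdot f^s$, and the commutation $[s,t] = -t$ coming from $s = -\partial_t t$ (equivalently $ts = (s+1)t$) yields $t\,P(s) f^s = P(s+1)\cdot f\cdot f^s$ for every $P(s) \in \mathscr{D}_X[s]$. Hence
\[ t\mathcal{M}_f \;=\; \mathscr{D}_X[s]\cdot f\cdot f^s. \]
By Bernstein's definition, $b_f(s)$ is precisely the monic generator of the ideal $\{ b(s) \in \mathbb{C}[s] \mid b(s) f^s \in \mathscr{D}_X[s]\cdot f\cdot f^s\}$, so $b_f(s)$ is tautologically the minimal polynomial of $s$ acting on the cyclic $\mathbb{C}[s]$-submodule of $\mathcal{M}_f / t \mathcal{M}_f$ generated by $[f^s]$.

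Next I would promote this to the full quotient. Because $b_f(s) \in \mathbb{C}[s]$ is central in $\mathscr{D}_X\langle t, s \rangle$ and every element of $\mathcal{M}_f$ is of the form $P(s) f^s$, one has
\[ b_f(s) \cdot P(s) f^s \;=\; P(s)\cdot \bigl(b_f(s) f^s\bigr) \;\in\; P(s)\cdot t\mathcal{M}_f. \]
I would then check that $t\mathcal{M}_f$ is stable under the left $\mathscr{D}_X[s]$-action: sections of $\mathscr{D}_X$ commute with $t$, while $st = t(s-1)$ gives $s \cdot t\mathcal{M}_f \subseteq t\mathcal{M}_f$, so inductively $P(s)\cdot t\mathcal{M}_f = t\cdot P(s-1)\mathcal{M}_f \subseteq t\mathcal{M}_f$. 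Consequently $b_f(s)\cdot \mathcal{M}_f \subseteq t\mathcal{M}_f$, i.e.\ $b_f(s)$ annihilates the entire quotient. Conversely, any polynomial $\mu(s) \in \mathbb{C}[s]$ annihilating $\mathcal{M}_f / t\mathcal{M}_f$ satisfies $\mu(s) f^s \in t\mathcal{M}_f$ and is therefore divisible by $b_f(s)$. The two divisibilities force the minimal polynomial to equal $b_f(s)$.

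The only real bookkeeping step — and the one I would treat most carefully — is the clean description $t\mathcal{M}_f = \mathscr{D}_X[s]\cdot f\cdot f^s$, because the naive formula for the $t$-action in item (ii) of the graph-embedding definition mixes the $\delta$-term with a $\partial_t$-lowered term, so one must combine $f\delta = t\delta$ with the commutation $[s,t]=-t$ to get the correct ``$s \mapsto s+1$'' shift. Once this identification is in place, the rest is the standard pattern that a central polynomial is determined by its action on the cyclic generator.
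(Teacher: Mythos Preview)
Your argument is correct and is the standard one. There is one small slip of phrasing: you write that $b_f(s)\in\mathbb{C}[s]$ is central in $\mathscr{D}_X\langle t,s\rangle$, but this is false since $st=t(s-1)$ (a relation you yourself use a few lines later). What your computation actually needs, and uses, is only that $\mathbb{C}[s]$ is central in $\mathscr{D}_X[s]$, so that $b_f(s)P(s)f^s=P(s)\,b_f(s)f^s$ for $P(s)\in\mathscr{D}_X[s]$; combined with the $\mathscr{D}_X[s]$-stability of $t\mathcal{M}_f=\mathscr{D}_X[s]\,f\cdot f^s$ (which the paper itself records just before the proposition), this gives $b_f(s)\mathcal{M}_f\subseteq t\mathcal{M}_f$. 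With that correction the proof goes through.

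As for comparison: the paper does not prove this proposition at all. It is quoted as a classical fact with a reference to Bernstein, and the surrounding text already supplies the identification $t\cdot\mathcal{M}_f\cong\mathscr{D}_X[s]\,f\cdot f^s$ that you single out as the key bookkeeping step. So there is nothing to compare your approach against; what you wrote is exactly the expected justification.
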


It is a well-known result due to Kashiwara \cite{Kas76} that the roots of the Bernstein-Sato polynomial are negative rational numbers. The existence of the \( V \)-filtration on \( \iota_+ \mathscr{O}_X \) is originally due to Malgrange \cite{Mal83} and Kashiwara \cite{Kas83} using the rationality of the roots of the Bernstein-Sato polynomial. In fact, Malgrange in \cite{Mal83} proves the existence of the \( V \)-filtration on \( \iota_+ \mathscr{O}_X(*D) \). Then, one simply has that \( V^\bullet \iota_+ \mathscr{O}_X = \iota_+ \mathscr{O}_X \cap V^\bullet \iota_+ \mathscr{O}_X(*D)\). Moreover, the following is true.

\begin{lemma}[{\cite[Lemme 3.1.7]{Sai88}}] \label{lemma:saito}
The canonical inclusion \( \iota_+ \mathscr{O}_X \hookrightarrow \iota_+ \mathscr{O}_X(*D) \) induces an equality
\[ V^\alpha \iota_+ \mathscr{O}_X = V^\alpha \iota_+ \mathscr{O}_X(*D) \quad \textnormal{for all} \quad \alpha > 0.\]
\end{lemma}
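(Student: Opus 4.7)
Since the formula $V^\bullet \iota_+ \mathscr{O}_X := \iota_+ \mathscr{O}_X \cap V^\bullet \iota_+ \mathscr{O}_X(*D)$ recorded just before the lemma makes the inclusion $V^\alpha \iota_+ \mathscr{O}_X \subseteq V^\alpha \iota_+ \mathscr{O}_X(*D)$ tautological, the content of the lemma is the reverse: $V^\alpha \iota_+ \mathscr{O}_X(*D) \subseteq \iota_+ \mathscr{O}_X$ for every $\alpha > 0$.

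The plan is to fix $m \in V^\alpha \iota_+ \mathscr{O}_X(*D)$ and, using the localization $\iota_+ \mathscr{O}_X(*D) = (\iota_+ \mathscr{O}_X)_t$ from \eqref{eq:localization}, pick the minimal $N \geq 0$ with $t^N m \in \iota_+ \mathscr{O}_X$; the aim is to show $N=0$. Assume toward a contradiction $N \geq 1$ and set $w := t^{N-1} m$. Property (ii) of the $V$-filtration places $w$ in $V^{\alpha+N-1}\iota_+ \mathscr{O}_X(*D)$ with $\alpha+N-1 \geq \alpha > 0$, while by construction $tw = t^N m \in \iota_+ \mathscr{O}_X$. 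The minimality of $N$ is violated the moment one knows $w \in \iota_+ \mathscr{O}_X$, so the lemma reduces to the following divisibility claim: for every $\beta > 0$, any $w \in V^\beta \iota_+ \mathscr{O}_X(*D)$ with $tw \in \iota_+ \mathscr{O}_X$ already lies in $\iota_+ \mathscr{O}_X$. Equivalently, the induced $V$-filtration on the quotient $\iota_+ \mathscr{O}_X(*D)/\iota_+ \mathscr{O}_X$ vanishes in positive degree.

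This divisibility is the crux, and I expect it to absorb the entire technical content of the proof. The natural instrument is the Bernstein--Sato polynomial of \cref{prop:BS}: Kashiwara's theorem forces its roots to be strictly negative, so that $b_f(\alpha) \neq 0$ for every $\alpha \geq 0$ and the jumping numbers of the $V$-filtration are strictly positive rationals. Concretely, the functional equation $b_f(s)f^s \in \mathscr{D}_X \cdot f \cdot f^s$, rewritten via $s = -\partial_t t$ and $t\delta = f\delta$ as $b_f(-\partial_t t)\cdot \delta \in t\cdot \iota_+ \mathscr{O}_X$, supplies the baseline divisibility by $t$ of an image of $\delta$ inside $\iota_+ \mathscr{O}_X$. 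Propagating it, via the $V^0 \mathscr{D}_Y$-action and the coherence of $V^\beta \iota_+ \mathscr{O}_X(*D)$ over $V^0 \mathscr{D}_Y = \mathscr{D}_X\langle t, t\partial_t\rangle$ (which preserves both $\iota_+ \mathscr{O}_X$ and the sub-lattice $\mathcal{M}_f = \mathscr{D}_X[s]f^s$), to an arbitrary local $V^0 \mathscr{D}_Y$-generator of $V^\beta \iota_+ \mathscr{O}_X(*D)$ for $\beta > 0$ yields the containment $V^\beta \iota_+ \mathscr{O}_X(*D) \subseteq \mathcal{M}_f \subseteq \iota_+ \mathscr{O}_X$. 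This subsumes the divisibility claim and completes the proof.
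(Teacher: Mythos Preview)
The paper does not prove this lemma; it only quotes it from Saito \cite[Lemme 3.1.7]{Sai88}. So there is no in-paper argument to compare against, and your attempt must be judged on its own.

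Your first two paragraphs are correct and efficiently reduce the statement to the divisibility claim: for $\beta>0$, any $w\in V^{\beta}\iota_+\mathscr{O}_X(*D)$ with $tw\in\iota_+\mathscr{O}_X$ already lies in $\iota_+\mathscr{O}_X$.

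The third paragraph, however, is not a proof. You invoke the functional equation $b_f(-\partial_t t)\,\delta\in t\cdot\iota_+\mathscr{O}_X$, which is a statement about the \emph{single} element $\delta$, and then assert that it ``propagates via the $V^{0}\mathscr{D}_Y$-action and coherence'' to give $V^{\beta}\iota_+\mathscr{O}_X(*D)\subseteq\mathcal{M}_f$ for every $\beta>0$. No mechanism is supplied. Coherence over $V^{0}\mathscr{D}_Y$ only says $V^{\beta}$ is finitely generated; it tells you nothing about where those generators sit relative to $\mathcal{M}_f$, and $V^{0}\mathscr{D}_Y\cdot\delta$ is generally much smaller than $V^{\beta}\iota_+\mathscr{O}_X(*D)$. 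The inclusion $V^{\beta}\subseteq\mathcal{M}_f$ you want is true for $\beta$ exceeding the largest root of $b_f(-s)$, but that is essentially Malgrange's construction of the $V$-filtration itself, not a consequence of the one displayed relation.

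Your reduction can be completed directly from axiom~(iv), bypassing the hand-waving. From $tw\in\iota_+\mathscr{O}_X$ one has $(\partial_t t)^{j}w=(\partial_t t)^{j-1}\partial_t(tw)\in\iota_+\mathscr{O}_X$ for every $j\geq 1$. Pick $\gamma\geq\beta$ with $w\in V^{\gamma}\setminus V^{>\gamma}$ and expand the nilpotency relation $(\partial_t t-\gamma)^{k}w\in V^{>\gamma}$: every term except $(-\gamma)^{k}w$ lies in $\iota_+\mathscr{O}_X$, so $\gamma\neq 0$ gives $w\in\iota_+\mathscr{O}_X+V^{>\gamma}$. Iterating over the finitely many jumps in any bounded interval and using that $V^{\gamma}\iota_+\mathscr{O}_X(*D)\subseteq\iota_+\mathscr{O}_X$ once $\gamma$ exceeds all roots of $b_f(-s)$ (this is where the Bernstein--Sato input is genuinely used) finishes the argument. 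Equivalently, and this is Saito's route, the quotient $\iota_+\mathscr{O}_X(*D)/\iota_+\mathscr{O}_X$ is supported on $\{t=0\}$, and any such module has $V^{>0}=0$.
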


\section{The algorithm} \label{sec:algorithm}

In this section we shall assume that \( X = \mathbb{A}_\mathbb{C}^n \). Therefore, \( R = \Gamma(X, \mathscr{O}_X) = \mathbb{C}[x_1, \dots, x_n] \) is the polynomial ring and \( D = \Gamma(X, \mathscr{D}_X) = \mathbb{C}[x_1, \dots, x_n] \langle \partial_{x_1}, \dots, \partial_{x_n} \rangle \) is the Weyl algebra. The algorithm presented in this section will make use of the following constructions in computational \( D \)-module theory.

\jump

\subsection{The \texorpdfstring{\( s \)}{s}-parametric annihilator} Let \( f \in R \) be non-constant. The cyclic \( D[s] \)-module \( D[s]f^s \) is isomorphic to \( D[s] / \textnormal{Ann}_{D[s]}f^s \), where \( \textnormal{Ann}_{D[s]}f^s \) is the \( s \)-parametric annihilator of the formal symbol \( f^s \).

\jump

Consider the Malgrange ideal of \( f \),
\[ I_f := D\langle t, \partial_t\rangle \langle f - t, \partial_{x_1} + \frac{\partial f}{\partial x_1} \partial_t, \dots, \partial_{x_n} + \frac{\partial f}{\partial x_n} \partial_t \rangle. \]
Then, the \( s \)-parametric annihilator of \( f \) equals \( \left.I_f \cap D[\partial_t t]\right|_{\partial_t t = -s} \), see for instance \cite[Theorem 5.3.4]{SST00}. Moreover, such elimination of variables can be computed using Gröbner basis techniques in the Weyl algebra, see \cite[Algorithm 5.3.6]{SST00}. There are similar ways to compute generators for \( \textnormal{Ann}_{D[s]}f^s \) due to Briançon and Maisonobe \cite{BM02} that usually perform better due to the need to eliminate fewer variables.

\jump

By \cref{prop:BS}, the Bernstein-Sato polynomial \( b_f(s) \) of \( f \) can then be computed as the minimal polynomial of \( s \) acting on
\[ \frac{D[s] f^s}{D[s] \langle f \rangle f^{s}}\cong \frac{D[s]}{\textnormal{Ann}_{D[s]} f^s + D[s] \langle f \rangle}. \]
That is, \( \langle b_{f}(s) \rangle = (\textnormal{Ann}_{D[s]} f^s + D[s] \langle f \rangle) \cap \mathbb{C}[s] \). This intersection can either be computed by standard Gröbner basis elimination techniques or taking advantage of the fact that \( \mathbb{C}[s] \) is a principal subalgebra of \( D[s] \), see \cite{ALMM09}.

\subsection{Modulo operation}

The following construction from \cite{Lev05} provides an efficient way of computing the kernel of morphisms of \( D \)-modules. Let \( N, M \) be left submodules of the free submodules \( D^n = \sum_{i = 0}^n D e_i \) and \( D^m \), respectively. Consider,
\[ \phi : D^n/N \longrightarrow D^m/M, \qquad e_i \mapsto \Phi_i, \]
a morphism of left \( D \)-modules given by the matrix \( \Phi = (\Phi_1 \ |\ \cdots \ |\ \Phi_n) \in D^{m \times n} \). Define the matrix
\def\arraystretch{1.1}
\[ Y = \left(\begin{array}{@{}c|c@{}}
    \Phi & M \\\hline
    \Id_{n \times n} & \textbf{0}
  \end{array}\right) \in D^{(m+n)\times(n+k)},
\]
where \( k \in \mathbb{N} \) is the number of generators of \( M \). Let \( Z = D^{n+m} Y \cap \bigoplus_{i = m + 1}^{n + m} D e_i \), this intersection can be computed with standard elimination of components. Then, one has  \[ \Ker \phi \cong (Z + N) / N,\] see \cite[Lemma 9]{Lev05}. This method avoids the computation of unnecessary syzygies and computes only those relevant to \( \Phi  \). Keeping the same notation from \cite{Lev05} and \textsc{Singular} \cite{Sing}, we will denote the operation that computes a system of generators for \( Z \) from generators of \( \Phi \) and \( M \) as \( \textsc{Modulo}(\Phi, M) \).

\subsection{The algorithm}

We present next the main contribution of this work. The following algorithm computes a set of generators of the \( \mathscr{O}_X \)-modules \( G_p V^\alpha \iota_+ R \) for any \( \alpha \in \mathbb{Q} \cap (0, 1] \). This is of course feasible because, by the definition of \( V \)-filtration, there is only a finite number of different such \( \mathscr{O}_X \)-modules when \( \alpha \) ranges in \( \mathbb{Q} \cap (0, 1] \). The algorithm is inspired by the construction of the \( V \)-filtration on \( \iota_{+}\mathscr{O}_X(*D) \) by Malgrange \cite{Mal83}.

\jump

The main ideas behind the algorithm are the following. For \( \alpha \in \mathbb{Q} \cap (0, 1] \), we will actually compute \( G_p V^\alpha \iota_+ R[f^{-1}] \). By \eqref{eq:localization-t}, any element of \( \iota_+ R[f^{-1}] \cong R[s, f^{-1}]f^s \) lies in \( T_k \iota_{+} R[f^{-1}] = t^{-k} \mathcal{M}_f = D[s]f^{s-k} \) for some \( k \in \mathbb{N} \). Then, consider the endomorphism
\[ s : \frac{D[s]f^{s-p}}{t D[s]f^{s}} \longrightarrow \frac{D[s]f^{s-p}}{t D[s]f^{s}} \]
with minimal polynomial \( b^{(p)}_f(s) \). Since the action of \( t \) is bijective, the roots of \( b^{(p)}_f(s) \) are of the form \( \alpha + k\) for \( \alpha \) a root of \( b_f(s) \) and \( k = 0, \dots, p \). Hence, one has the decomposition
\begin{equation} \label{eq:sec3-eq1}
  \frac{D[s]f^{s - p}}{t D[s] f^s} = \bigoplus_{\lambda} P_\lambda,
\end{equation}
where the sum ranges over all the roots \( \lambda \) of \( b^{(p)}_f(-s) \) and \( P_\lambda \) is the submodule on which \( s + \lambda \) acts nilpotently. Consider, for \( \alpha \in \mathbb{Q} \cap (0, 1] \), the \( D[s] \)-submodule \( W_\alpha \) satisfying \( t D[s] f^s \subseteq W_\alpha \subseteq D[s] f^{s-p} \) and
\begin{equation} \label{eq:sec3-eq2}
  \frac{W_\alpha}{t D[s] f^s} = \bigoplus_{\lambda > \alpha} P_\lambda.
\end{equation}
Then, \( G_p V^\alpha \iota_+ R[f^{-1}] \) can be computed from \( W_\alpha \cap R[s] \) by taking all the elements of degree less than or equal to \( p \) in \( s \). The full details of the correctness of the algorithm are delayed until \cref{thm:alg1} below.

\jump

All the sets appearing in \cref{alg1} below are assumed to be ordered.

\newpage

\begin{algorithm}(Generators \( G_p V^\alpha \iota_+ R \)) \label{alg1}
\begin{algorithmic}[1]
\REQUIRE A reduced \(f \in R \) and \( p \in \mathbb{N} \).
\ENSURE A basis of the \( R \)-module \( G_p V^\alpha \iota_{+}R  \) for \( \alpha \in \mathbb{Q} \cap (0, 1] \).
\jump
\jump
\STATE \( G \gets \) Gröbner basis of \( \textnormal{Ann}_{D[s]}f^s \) w.r.t. any monomial ordering
\STATE \( J^{(p)} \gets \) Gröbner basis of \( D[s] \langle \left.G\right|_{s \mapsto s - p}, f^{p + 1} \rangle \) w.r.t. an elimination order for \( \underline{x}, \underline{\partial} \)
\STATE \( b^{(p)}_f(s) \gets \) generator of \( J^{(p)} \cap \mathbb{C}[s] \)
\STATE \( \rho^{(p)}_f \gets \{ (\alpha_i, n_i) \in \mathbb{Q} \times \mathbb{N} \ |\ b^{(p)}_f(s) = (s - \alpha_1)^{n_1} \cdots (s - \alpha_r)^{n_r},\, \alpha_1 < \cdots < \alpha_r < 0\} \)
\jump
\jump
\FOR{\( (\alpha, n_\alpha) \in \{ (\alpha_i, n_i) \ |\ (\alpha_i, n_i) \in \rho^{(p)}_f, \alpha_ i < 0 \}\)}
  \STATE \( K_{\alpha} \gets \textsc{Modulo}(s - \alpha, J^{(p)}) \)
  \FOR{\( i = 1, \dots, n_\alpha - 1 \)}
  \STATE \( K_{\alpha} \gets \textsc{Modulo}(s - \alpha, K_\alpha) \)
  \ENDFOR
\ENDFOR
\jump
\jump
\STATE \( \alpha' \gets -\infty \)
\STATE \( G_{\alpha'} \gets \bigcup_{\lambda < -1} K_\lambda \)
\FOR{\( \alpha \in \{ \alpha_i + k \ |\ (\alpha_i, n_i) \in \rho^{(p)}_f, \alpha_i \in [-1, 0) \} \)}
  \STATE \( G_\alpha \gets \) Gröbner basis of \( D[s] \langle G_{\alpha'}, K_\alpha \rangle \) w.r.t. an elimination order for \( \underline{\partial} \)
  \STATE \( H_\alpha \gets \) Gröbner basis of \( G_\alpha \cap R[s] \) w.r.t. an elimination order for \( s \)
  \STATE \( H_\alpha^{(p)} \gets \big\{ \sum_{j = 0}^p h_j (-\partial_t t)^j \ |\ \sum_{j = 0}^p s^j h_j \in H_\alpha \big\} \)
  \STATE \( B_\alpha^{(p)} \gets \big\{ \sum_{j = 0}^p h_j' \partial_t^j f^j \cdot f^{-p} \ |\ \sum_{j = 0}^p h_j' \partial_t^j t^j \in H_{\alpha}^{(p)}\big\}  \)
  \STATE \(\alpha' \gets \alpha\)
\ENDFOR
\jump
\jump
\RETURN \( B_\alpha^{(p)} \)
\end{algorithmic}
\end{algorithm}

\begin{theorem} \label{thm:alg1}
\cref{alg1} is correct.
\end{theorem}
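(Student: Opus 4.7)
The plan is to show, step by step, that each quantity computed in \cref{alg1} corresponds to the mathematical object it is supposed to represent, by combining: (i) standard facts about the \( s \)-parametric annihilator, (ii) the decomposition~\eqref{eq:sec3-eq1} together with its correspondence to the \( V \)-filtration indices, and (iii) the isomorphism \( \Phi \) of \cref{prop:bijection}.

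The first step is to check that after line~5, \( D[s]/J^{(p)} \) is a presentation of the \( D[s] \)-module \( D[s]f^{s-p}/tD[s]f^s \) with \( 1 \leftrightarrow f^{s-p} \). Indeed, \( \textnormal{Ann}_{D[s]}f^{s-p} \) is generated by the image of \( \textnormal{Ann}_{D[s]}f^s \) under \( s \mapsto s-p \), while the submodule \( tD[s]f^s \) inside \( D[s]f^{s-p} \) equals \( f^{p+1}D[s]f^{s-p} \), since \( tf^s = f\cdot f^s = f^{p+1}\cdot f^{s-p} \). Applying \cref{prop:BS} to the generator \( f^{s-p} \) then yields that \( s \) on this quotient has minimal polynomial \( b^{(p)}_f(s) = \prod_{k=0}^p b_f(s-k) \), whose roots are precisely the \( \alpha_i + k \) and whose generalized eigenspaces are the summands \( P_\lambda \) appearing in~\eqref{eq:sec3-eq1}.

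Next, I would verify that the inner loop (lines~7--11) computes generators of the preimage in \( D[s] \) of the generalized eigenspace \( P_{-\alpha} \): a single invocation of \( \textsc{Modulo}(s - \alpha, J^{(p)}) \) returns, by definition, generators of \( \{P \in D[s] \mid (s-\alpha) P \in J^{(p)}\} \), and \( n \) iterations yield \( \{P \mid (s-\alpha)^n P \in J^{(p)}\} \), which for \( n \) equal to the multiplicity of \( \alpha \) as a root of \( b^{(p)}_f \) recovers the full generalized eigenspace. The outer loop (lines~14--19) then builds the submodule \( V_\alpha \) representing \( W_\alpha \) of~\eqref{eq:sec3-eq2}: using the identification between the eigenvalue \( \mu \) of \( s \) on \( P_\mu \) and the \( V \)-filtration index \( -\mu \) (which follows from \( s = -\partial_t t \) together with the nilpotency condition~(iv) of a \( V \)-filtration), the \( K_\lambda \) with \( \lambda < -1 \) automatically lie in every \( V^{\beta}\iota_+ R[f^{-1}] \) for \( \beta \in (0,1] \) (hence the initialization in line~13), and the loop progressively adjoins the \( K_\lambda \) with \( \lambda \in [-1,0) \). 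That the \( D[s] \)-module so constructed equals the lattice \( W_\alpha \) of~\eqref{eq:sec3-eq2} is then an appeal to Malgrange's explicit construction of the \( V \)-filtration on \( \iota_+ R[f^{-1}] \) \cite{Mal83}.

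Finally, the extraction \( H_\alpha = V_\alpha \cap R[s] \) in line~16 (via an elimination order for \( \underline{\partial} \)) selects those elements of \( W_\alpha \) that already live in \( R[s]f^{s-p} \), and the \( s \)-degree \( \leq p \) truncation together with the identity \( \partial_t^j t^j = Q_j(-s) \) transports them under \( \Phi \) into the \( \iota_+ R \)-representation. Lines~17--18 carry out this change of basis and absorb the factor \( f^{-p} \) coming from having worked with \( f^{s-p} \) in place of \( f^s \); by \cref{lemma:saito}, since \( -\alpha \in (0,1] \) for the \( \alpha \) appearing in the loop, \( V^{-\alpha}\iota_+ R[f^{-1}] = V^{-\alpha}\iota_+ R \), so the \( B_\alpha^{(p)} \) produced are indeed generators of \( G_p V^{-\alpha}\iota_+ R \). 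The main technical obstacle I anticipate is the precise matching between the \( s \)-degree filtration on \( R[s]f^{s-p} \) and the \( G_\bullet \) filtration on \( \iota_+ R[f^{-1}] \) under \( \Phi \): one must argue that truncating to \( s \)-degree \( \leq p \) on the \( R[s] \)-side recovers exactly \( G_p V^{-\alpha}\iota_+ R[f^{-1}] \), not a proper \( R \)-submodule thereof, so that no generators are lost in the final step.
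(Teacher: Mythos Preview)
Your outline follows the same arc as the paper's proof, and most of the intermediate identifications (the presentation in line~5, the role of \textsc{Modulo}, the meaning of the $K_\alpha$'s and $V_\alpha$'s) are correct. Two points, however, are either imprecise or left open, and the paper handles both explicitly.

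First, a minor inaccuracy: the minimal polynomial $b_f^{(p)}(s)$ of $s$ on $D[s]f^{s-p}/tD[s]f^s$ need not equal the full product $\prod_{k=0}^p b_f(s-k)$; it only divides it. The paper states, more carefully, that its roots are \emph{among} the $\alpha_i+k$. This does not break the algorithm, since taking $n_i$ as an upper bound for the nilpotency index at $\alpha_i+k$ still computes the correct generalized eigenspace, but your formulation as an equality is not justified.

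Second, and more substantively, the ``main technical obstacle'' you flag at the end is precisely the crux, and you do not resolve it. The paper splits the equality $R\langle B_\alpha^{(p)}\rangle = G_p V^{-\alpha}\iota_+ R$ into two inclusions. For $B_\alpha^{(p)} \subseteq G_p V^{-\alpha}\iota_+ R$ it does \emph{not} appeal to Malgrange's construction as a black box; instead it directly checks the axioms of a $V$-filtration on the tower $R\langle B_\alpha^{(p)}\rangle$: nilpotency of $\partial_t t+\alpha$ on successive quotients is immediate from the eigenspace construction, and the inclusions $t\cdot R\langle B_\alpha^{(p)}\rangle \subseteq R\langle B_{\alpha+1}^{(p)}\rangle$ and $\partial_t\cdot R\langle B_\alpha^{(p)}\rangle \subseteq R\langle B_{\alpha-1}^{(p+1)}\rangle$ follow from the commutation $t(s-\alpha)^n=(s-\alpha+1)^n t$ and the identity $\partial_t=t^{-1}(\partial_t t-1)$. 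Uniqueness of the $V$-filtration then gives the inclusion. For the reverse inclusion $G_p V^{-\alpha}\iota_+ R \subseteq R\langle B_\alpha^{(p)}\rangle$, which is exactly the filtration-matching issue you anticipate, the paper uses the \emph{inverse} isomorphism $\Psi$ of \cref{prop:bijection} (not $\Phi$): the explicit formula $u\otimes\partial_t^j\delta \mapsto (u/f^j)Q_j(-s)f^s$ shows that $\Psi$ sends $G_p\iota_+ R$ into $T_p\iota_+ R[f^{-1}]\cong D[s]f^{s-p}$, so every element of $G_p V^{-\alpha}\iota_+ R$ already lives in the lattice where the eigenspace decomposition was carried out, and is therefore captured by the truncation to $s$-degree $\leq p$. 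This is the missing piece in your sketch.
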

\begin{proof}
In Lines 1--4, the algorithm starts by computing a Gröbner basis \( G \) of the \( s \)-parametric annihilator \( \textnormal{Ann}_{D[s]} f^s \) and the Bernstein-Sato polynomial \( b^{(p)}_{f}(s) \) of \( f \), using the standard methods discussed at the beginning of this section.

\jump

In Line 2, \( J^{(p)} \) is a basis of the submodule of \( D[s] \) giving a presentation of \( D[s]f^{s -p}/tD[s]f^s \). Indeed,
\begin{equation} \label{eq:thm-eq1}
\frac{D[s]f^{s-p}}{tD[s]f^s} = \frac{t^{-p} D[s] f^{s}}{tD[s]f^s} = \frac{t^{-p} D[s] f^{s}}{t^{p+1} \cdot t^{-p}D[s]f^s} \cong \frac{D[s]}{\textnormal{Ann}_{D[s]} f^{s-p} + D[s]\langle f^{p+1} \rangle}.
\end{equation}
Notice that, since we already have a basis \( G \) of \( \textnormal{Ann}_{D[s]} f^s \), after the substitution \( s \mapsto s - p \) in \( G \) one gets a basis of \( \textnormal{Ann}_{D[s]} f^{s-p} \). 

\jump

The loop that starts in Line 5 iterates over the strictly negative roots of \( b_f^{(p)}(s)\). For each root \( \alpha \) of multiplicity \( n_\alpha \), the next steps, Lines 6--9, compute a basis \( K_\alpha \) of the kernel of the morphism

\[ (s - \alpha)^{n_\alpha} : \frac{D[s]}{D[s]\langle J^{(p)}\rangle} \longrightarrow \frac{D[s]}{D[s]\langle J^{(p)} \rangle}.  \]
Then, the submodule \( P_{-\alpha} \) from \eqref{eq:sec3-eq1} associated to \( s - \alpha \) is isomorphic to \( D[s] \langle K_\alpha \rangle / D[s] \langle J^{(p)} \rangle \). The computation is done inductively by computing the kernel of \( (s - \alpha)^{i+1} \) from the kernel of \( (s - \alpha)^i \). This strategy makes the computation much more efficient in practice.

\jump

The last part of the Algorithm works as follows. The sets \( G_\alpha = \bigcup_{\lambda < \alpha} K_\lambda \) form a basis of the \( D[s] \)-submodules \( W_{-\alpha} \) from \eqref{eq:sec3-eq2}. Indeed, by construction every submodule \( D[s] \langle K_\alpha \rangle \) contains \( D[s] \langle J^{(p)} \rangle \), and hence it contains \( D[s] \langle f^{p+1} \rangle \). Then, since \( t D[s] f^s = t^{p + 1} D[s]f^{s-p} \), the submodules \( tD[s]f^s \subseteq W_\alpha \subseteq D[s]f^{s-p} \) are isomorphic to
\[ \frac{D[s] \langle f^{p+1} \rangle}{\textnormal{Ann}_{D[s]} f^{s-p}} \subseteq \frac{D[s] \langle G_\alpha \rangle}{\textnormal{Ann}_{D[s]} f^{s-p}} \subseteq \frac{D[s]}{\textnormal{Ann}_{D[s]} f^{s-p}}.\]

The set \( G_\alpha \cap R[s] \) in Line 15 forms a Gröbner basis of the \( R[s] \)-module \( W_{-\alpha} \cap R[s] \) since one has \( \textnormal{Ann}_{D[s]} f^{s} \cap R[s] = 0 \). It remains to show how an \( R \)-basis of \( G_p V^{-\alpha} R[f^{-1}] \) is obtained from \( W_{\alpha} \cap R[s] \). Given an elimination order \( \succcurlyeq_s \) for \( s \) in \( R[s] \), we have that \( R[s]_{\succcurlyeq_s} = R_{\succcurlyeq_s'}[s] \) where \( \succcurlyeq_s' \) is the monomial order induced by \( \succcurlyeq_s \) in \( R \). Therefore, for any \( f \in R[s] \), if the leading monomial of \( f \) with respect to \( \succcurlyeq_s \) is in \( R \cdot s^i  \), then \( f \in \bigoplus_{j=0}^i R \cdot s^j \). Consequently, a Gröbner basis \( H_\alpha \) of \( G_\alpha \cap R[s] \) with respect to \( \succcurlyeq_s \) gives a basis of \( G_\alpha \cap R[s] \) as \( R \)-submodule of \( \bigoplus_{j = 0}^i R \cdot s^j \).

\jump

Taking the elements of degree at most \( p \) in \( s \) from \( H_\alpha \) and making the substitution \( s = - \partial_t t \) leaves us with elements of the form \( \sum_{j = 0}^p h_j' \partial_t^j t^j \). Line 17 denotes by \( H_\alpha^{(p)} \) the set of such elements. The isomorphism
\[ \frac{D[s]}{\textnormal{Ann}_{D[s]}f^{s-p}} \cong D[s] f^{s-p} \cong t^{-p} D[s] f^s \cong t^{-p} D[\partial_t t] \delta  \]
is given by sending the class of \( 1 \) to \( t^{-p} \delta \). Therefore, since \( t \delta = f \delta \), the claimed \( R \)-basis of \( G_{p} V^{-\alpha} \iota_{+} R \) will be given by the set \( B_\alpha^{(p)} \) obtained from \( H_{\alpha}^{(p)} \) after substituting \( t = f \) and multiplying by \( f^{-p} \). Notice that the elements of \( B_\alpha^{(p)} \) are well-defined in \( \iota_{+} R[f^{-1}] \). However, for \( \alpha < 0 \), \(B_\alpha^{(p)}\) is actually in \( \iota_{+} R \), that is, the division by \( f^p \) in Line 18 give rise to no rational functions. Indeed, this will follow from \cref{lemma:saito} once we show that \( B_\alpha^{(p)} \subseteq G_p V^{-\alpha} \iota
_{+} R[f^{-1}] \).

\jump

Let \( \alpha' \) be a rational number from the set in Line 14 such that \( \alpha' < \alpha \). Then, we have \( B_{\alpha'}^{(p)} \subseteq B_{\alpha}^{(p)} \). If \( \alpha' \) is the largest of such numbers, then \( \partial_t t + \alpha \) acts nilpotently on \( R\langle B_\alpha^{(p)} \rangle/R\langle B_{\alpha'}^{(p)}\rangle \) by construction. It remains to show that \( t \cdot R\langle B_{\alpha}^{(p)} \rangle \subseteq R \langle B_{\alpha - 1}^{(p)} \rangle \) and \( \partial_t \cdot R \langle B_{\alpha}^{(p)} \rangle \subseteq R \langle B_{\alpha+1}^{(p+1)} \rangle \). Let \( u \in P_{-\alpha} \cong D[s] \langle K_\alpha \rangle / \textnormal{Ann}_{D[s]} f^{s-p} \), then the first inclusion follows from the equality
\[ t \cdot (s - \alpha)^n u = (s - \alpha + 1)^n t \cdot u = 0. \]
For the second one, notice that \( s W_\alpha \subseteq W_\alpha \). Then, the second inclusion follows by the identity \( \partial_t = t^{-1}(\partial_t t - 1) \). This proves that \( B_\alpha^{(p)} \subseteq G_p V^{-\alpha} \iota_{+} R \).

\jump

In order to conclude the proof, the last thing remaining is to show the reverse inclusion \( G_{p} V^{-\alpha} \iota_{+} R \subseteq R \langle B_{\alpha}^{(p)} \rangle \). But this follows from the fact that the isomorphism \( \Psi \) from \cref{prop:bijection} sends the elements of \( G_p \iota_+ R \) to \( T_p \iota_{+} R[f^{-1}] = D[s] f^{s-p} \).
\end{proof}

A straightforward application of \cref{thm:mustata-popa} gives the following corollary.

\begin{corollary} \label{cor:corollary1}
For any \( p \in \mathbb{N} \) and \( f \in R \) reduced, \cref{alg1} computes generators for the Hodge ideals \( I_{k}(f^\alpha), \alpha \in \mathbb{Q} \cap (0, 1], k = 0, \dots, p \).
\end{corollary}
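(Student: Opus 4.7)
The plan is to obtain this corollary as an immediate consequence of \cref{thm:alg1} together with the Mustață--Popa characterization recalled in \cref{thm:mustata-popa}. By \cref{thm:alg1}, the algorithm produces, for every $\alpha \in \mathbb{Q} \cap (0,1]$, a finite set $B_\alpha^{(p)}$ whose $R$-span is exactly $G_p V^\alpha \iota_+ R$. Each element of $B_\alpha^{(p)}$ already has the explicit shape $\sum_{j=0}^p v_j \partial_t^j \delta$ with $v_j \in R$ required by the Mustață--Popa formula.

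Next I would invoke \cref{thm:mustata-popa}, which asserts that $I_p(\alpha D)$ is precisely the image of $G_p V^\alpha \iota_+ \mathscr{O}_X$ under the $R$-linear evaluation map
\[ \Theta_p^\alpha \colon \sum_{j=0}^p v_j \partial_t^j \delta \longmapsto \sum_{j=0}^p Q_j(\alpha) f^{p-j} v_j. \]
Since $\Theta_p^\alpha$ is $R$-linear, applying it term-by-term to each element of $B_\alpha^{(p)}$ immediately yields a finite generating set of $I_p(\alpha D)$ as an ideal of $R$. This handles the case $k = p$.

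For the remaining cases $0 \leq k < p$, the cleanest route is to re-run \cref{alg1} with input $k$ in place of $p$ and apply the same step; alternatively, one may notice that $G_k V^\alpha \iota_+ R$ is the $R$-submodule of $G_p V^\alpha \iota_+ R$ whose elements have zero coefficient in $\partial_t^j \delta$ for every $j > k$, and extract generators of this submodule from $B_\alpha^{(p)}$ by a routine $R$-module kernel computation before applying $\Theta_k^\alpha$. Uniformity in $\alpha \in \mathbb{Q} \cap (0,1]$ needs no further argument: as exploited in \cref{thm:alg1}, the assignment $\alpha \mapsto V^\alpha \iota_+ \mathscr{O}_X$ is piecewise constant with jumps only at shifts of the roots of $b_f(s)$, and the loop on Line~14 visits exactly these finitely many candidate values.

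I do not anticipate any real obstacle here: all the substantial work, including the correctness of the $V$-filtration computation and the reduction to the submodule $G_p V^\alpha \iota_+ R$, is already done inside \cref{thm:alg1}, and the corollary amounts to packaging that output through the purely algebraic substitution $\Theta_p^\alpha$ supplied by \cref{thm:mustata-popa}. The only point one has to watch is that the formula in \cref{thm:mustata-popa} is written for elements of $V^\alpha \iota_+ \mathscr{O}_X$ of $\partial_t$-degree at most $p$, which is precisely the set $G_p V^\alpha \iota_+ \mathscr{O}_X$ returned (up to the substitution $t = f$) by \cref{alg1}.
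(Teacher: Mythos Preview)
Your proposal is correct and matches the paper's approach: the paper states the corollary as an immediate consequence of \cref{thm:mustata-popa} applied to the output of \cref{alg1}, with no further argument. Your elaboration on how to extract the lower levels $k<p$ is consistent with the paper's implicit reasoning (the elimination order for $s$ in Line~16 already stratifies $H_\alpha$ by $s$-degree, so one can simply truncate at degree $k$ rather than perform a separate kernel computation), but this is a minor implementation detail rather than a different strategy.
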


In contrast with \( G_p V^{\alpha} \iota_{+} R[f^{-1}] \), assuming that \( p \in \mathbb{N} \) is fixed, the set of ideals \( I_p(f^\alpha) \) for \( \alpha \in \mathbb{Q} \cap (0, 1] \) is not finite since the Hodge ideals depend on \( \alpha \) even if the \( V \)-filtration does not, i.e. \( V^\alpha \iota_+ R = V^{\alpha + \epsilon} \iota_{+} R \) for \( 0 < \epsilon \ll 1 \). To remedy this and still have a finite output, one can work on a transcendental extension \( R(\alpha) = \mathbb{C}(\alpha)[x_1, \dots, x_n] \) of the base field and compute with the polynomials \( Q_i(\alpha) \) from \cref{thm:mustata-popa} symbolically.

\jump

\begin{remark}
In practice, for a fixed \( \alpha \in \mathbb{Q} \cap (0, 1] \), it is enough to compute the Hodge ideals \( I_k(f^{\alpha}) \) for \( k = 0, \dots, l, \) where \( l \) is the generating level of the Hodge filtration on \( \mathcal{M}(f^{-\alpha}) \). Recall that the generating level of any \( \mathscr{D}_X \)-module \( (\mathcal{M}, F_\bullet) \) equipped with a good filtration is the smallest integer \( l \) such that
\[ F_k \mathscr{D}_X \cdot F_l \mathcal{M} = F_{l + k} \mathcal{M} \quad \textnormal{for all} \quad k \geq 0. \]
Therefore, in our case, for \( k > l \), \( I_k(f^\alpha) \) is generated by \( f \cdot I_{k-1}(f^\alpha) \) and
\begin{equation} \label{eq:equation3}
 \{ f D(h) - (\alpha + k)h D(f) \ |\ h \in I_{k-1}(f^\alpha),\, D \in \textnormal{Der}_{\mathbb{C}}(R)\}.
\end{equation}
see \cite[\S 10]{MP19b}. In addition, for a reduced \( f \in \mathscr{O}_{X}(X) \), the Hodge filtration on \( \mathcal{M}(f^{-\alpha}) \) is known to be generated at level \( n - \lceil \tilde{\alpha}_f + \alpha \rceil \), see \cite[Theorem E]{MP20a}, where \( \tilde{\alpha}_f \) is the minimal exponent of \( f \), that is, the smallest root of \( b_f(-s)/(1-s) \). Conversely, notice that from \cref{cor:corollary1} and \eqref{eq:equation3} one can compute the minimal generating level of the Hodge filtration on \( \mathcal{M}(f^{-\alpha}) \).
\end{remark}

After \cref{thm:budur-saito}, we also obtain a new algorithm to compute the multiplier ideals and the (global) jumping numbers of any effective divisor, not necessarily reduced.

\begin{corollary}
For any \( f \in R \), \cref{alg1} computes a set of generators of the multiplier ideals \( \mathcal{J}(f^\alpha), \alpha \in \mathbb{Q} \cap (0, 1), \) and the jumping numbers of \( f \).
\end{corollary}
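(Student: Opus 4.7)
The plan is to invoke \cref{alg1} with $p = 0$ and translate its output through the Budur--Saito theorem.

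The first task is to verify that \cref{alg1} is in fact applicable to \emph{any} $f \in R$, not just the reduced case listed in its REQUIRE clause. Inspecting the proof of \cref{thm:alg1}, the reducedness hypothesis is never actually invoked: the construction depends only on the algorithmic availability of $\textnormal{Ann}_{D[s]}f^s$ and $b_f(s)$, on the generalized-eigenspace decomposition \eqref{eq:sec3-eq1} of $D[s]f^{s-p}/tD[s]f^s$, and on \cref{lemma:saito}. All three remain valid for arbitrary non-constant $f$; reducedness enters only through the Hodge-ideal interpretation supplied by \cref{thm:mustata-popa}.

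Specializing to $p = 0$, the algorithm returns generators of $G_0 V^\alpha \iota_+ R = V^\alpha \iota_+ R \cap R \cdot \delta$ at each candidate $\alpha \in \mathbb{Q} \cap (0, 1]$. Under the identification $R \cdot \delta \cong \mathscr{O}_X$ this is exactly $V^\alpha \mathscr{O}_X$, the $V$-filtration along $D = \textnormal{div}(f)$. The Budur--Saito theorem \cref{thm:budur-saito} then gives $\mathcal{J}(f^\alpha) = V^{\alpha + \epsilon} \mathscr{O}_X$ for $0 < \epsilon \ll 1$. Since the $V$-filtration is discrete and its jumps in $(0, 1]$ lie within the finite candidate set produced by \cref{alg1}, for each $\alpha \in \mathbb{Q} \cap (0, 1)$ one locates the smallest candidate $\alpha' \in (\alpha, 1]$ and reads off $\mathcal{J}(f^\alpha) = V^{\alpha'} \mathscr{O}_X$ from the algorithm's output (taking $\alpha' = 1$ if no candidate appears in $(\alpha, 1)$).

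The jumping numbers of $f$ in $(0, 1]$ are then detected by successive comparison of generating sets: a candidate $\lambda \in (0, 1]$ is a jumping number exactly when $V^\lambda \mathscr{O}_X$ strictly contains the $V^{\alpha'} \mathscr{O}_X$ attached to the next candidate $\alpha' > \lambda$. The remaining jumping numbers $\lambda > 1$ follow by periodicity: for the principal ideal $(f)$, Skoda's theorem gives $\mathcal{J}(f^\alpha) = f \cdot \mathcal{J}(f^{\alpha - 1})$ for every $\alpha \geq 1$, so $\lambda > 1$ is a jumping number of $f$ if and only if $\lambda - 1$ is, in agreement with the identity $t \cdot V^\beta \iota_+ R = V^{\beta + 1} \iota_+ R$ for $\beta > 0$. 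The principal obstacle is the first step: a careful justification that \cref{alg1} handles non-reduced inputs, after which everything else is a direct application of Budur--Saito combined with Skoda's periodicity.
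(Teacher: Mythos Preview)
Your proposal is correct and follows the same route the paper takes: the corollary is stated in the paper without proof, the only justification offered being the sentence ``After \cref{thm:budur-saito}, we also obtain a new algorithm\ldots'', so both you and the paper are simply invoking Budur--Saito on the $p=0$ output of \cref{alg1}. You supply more detail than the paper does---namely the explicit check that reducedness is never used in the proof of \cref{thm:alg1}, and the appeal to Skoda periodicity for jumping numbers beyond $1$---but these are elaborations of the same argument rather than a different approach.
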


\begin{remark} \label{rmk:errata}
In the published version of this paper, the output of \cref{alg1} could be incorrect since it was erroneously assumed that the roots multiplicities of \(b_{f}^{(p)}(s)\) could be computed from those of \(b_f(s)\). As a consequence, for certain examples, the Hodge ideals \(I_k(f^\alpha)\) could appear shifted with respect to the parameter \(k\).
\end{remark}

\subsection{Examples} Let us show some non-trivial examples of Hodge ideals computed with \cref{alg1}. Even though the algorithm can compute more complex examples, for the sake of space the examples below have been chosen with a small degree and number of variables.

\begin{example}
Let \( f = x^5 + y^5 + x^2y^2 \). This is perhaps the simplest plane curve which is not quasi-homogeneous or a \( \mu \)-constant deformation of a quasi-homogeneous singularity.

\def\arraystretch{1.3}
\begin{table}[!ht]
\centering
\begin{tabular}{|c|c|c|}
\hline
\(\alpha\) & \(I_0(f^\alpha)\) & \(I_1(f^\alpha)\) \\[1.5pt] \hline
\hline
\(\frac{1}{10}\)  &  \(R\) & \((x^3,x^2y,xy^2,y^3)\) \\[1.5pt] \hline
\(\frac{3}{10}\)  &  \(R\) & \((x^4,x^2y,xy^2,y^4)\) \\[1.5pt] \hline
\(\frac{1}{2}\)\tiny{(2)} &  \(R\) & \((5x^4+2xy^2,x^3y,x^2y^2,xy^3,5y^4+2x^2y)\) \\[1.5pt] \hline
\(\frac{7}{10}\)  &  \((x,y)\) & \(((5\alpha-2)x^5+(2\alpha-1)x^2y^2,x^3y,xy^3,(5\alpha-2)y^5+(2\alpha-1)x^2y^2)\) \\[1.5pt] \hline
\(\frac{9}{10}\)  &  \((x^2,xy,y^2)\) & \((x^6,x^4y,x^3y^2,x^2y^3,xy^4,y^6)\) \\[1.5pt] \hline
\(1\)\tiny{(2)} &  \((x^3,xy,y^3)\) & \((x^6-(2\alpha-1)x^3y^2,x^5y,x^4y^2,x^3y^3,x^2y^4,xy^5,y^6-(2\alpha-1)x^2y^3)\) \\[1.5pt] \hline
\end{tabular}
\jump
\caption{Hodge ideals \( I_p(f^\alpha), p = 0, 1, \alpha \in \mathbb{Q} \cap (0, 1]  \) for \( f = x^5 + y^5 + x^2y^2 \).}\label{table1}
\end{table}
\end{example}

The subscripts in the first columns of Tables \ref{table1}, \ref{table2} and \ref{table3} denote the nilpotency index of \( \partial_t t - \alpha \) on \( \textnormal{Gr}_V^\alpha \).

\begin{example}
Let \( f_\lambda = (y^2 - x^3)(y^2 + \lambda x^3),\, \lambda \in \mathbb{C}^* \). The parameter \( \lambda \in \mathbb{C}^* \) is an analytical invariant of the singularity defined by \( f_\lambda \) at the origin. That is, for different values of \( \lambda \in \mathbb{C}^* \), the singularities \( f_\lambda \) are not analytically equivalent. Set
\jump
\def\arraystretch{1.3}
\begin{table}[!ht]
\begin{tabular}{|c|c|c|}
\hline
\(\alpha\) & \(I_0(f_\lambda^\alpha)\) & \(I_1(f_\lambda^\alpha)\) \\[1.5pt] \hline
\hline
\(\frac{1}{12}\)  &  \(R\) & \((y^3,xy^2,x^3y,x^4)\) \\[1.5pt] \hline
\(\frac{1}{6}\)  &  \(R\) & \((y^3,x^2y^2,x^3y,x^5)\) \\[1.5pt] \hline
\(\frac{1}{4}\)  &  \(R\) & \((y^4,xy^3,x^2y^2,(\lambda-1)x^3y+2y^3,x^5)\) \\[1.5pt] \hline
\(\frac{1}{3}\)  &  \(R\) & \((y^4,xy^3,(\lambda-1)x^3y+2y^3,2\lambda x^5 - (\lambda-1)x^2y^2)\) \\[1.5pt] \hline
\(\frac{5}{12}\)  &  \(R\) & \((y^4,(\lambda-1)x^3y+2y^3,x^2y^3,2\lambda x^5 - (\lambda-1)x^2y^2)\) \\[1.5pt] \hline
\(\frac{7}{12}\)  &  \((x, y)\) & \((y^5,xy^4,x^2y^3,(\lambda-1)x^3y^2+2y^4,(\lambda-1)x^4y+2xy^3,\lambda x^6+y^4)\) \\[1.5pt] \hline
\(\frac{2}{3}\)  &  \((y, x^2)\) & \((y^5,x^2y^3,(\lambda-1)x^4y^2+2xy^4,x^5y,h_1)\) \\[1.5pt] \hline
\(\frac{3}{4}\)  &  \((x^2, xy, y^2)\) & \((y^5,x^2y^4,x^3y^3,(\lambda-1)x^4y^2+2xy^4,(\lambda-1)x^5y+2x^2y^3,\lambda x^7+xy^4)\) \\[1.5pt] \hline
\(\frac{5}{6}\)  &  \((y^2, xy, x^3)\) & \((y^5,x^2y^4,x^3y^3,x^5y^2,x^6y,xh_1)\) \\[1.5pt] \hline
\(\frac{11}{12}\)  &  \((y^2, x^2y, x^3)\) & \((y^6,xy^5,x^2y^4,h_2,x^5y^2,h_3,x^8)\) \\[1.5pt] \hline
\(1\)\tiny{(2)}  &  \((y^3, xy^2, x^2y, x^4)\) & \((y^6,xy^5,x^3y^4,x^4y^3,x^6y^2,h_4)\) \\[1.5pt] \hline
\end{tabular}
\jump
\caption{Hodge ideals \( I_p(f_\lambda^\alpha), p = 0, 1, \alpha \in \mathbb{Q} \cap (0, 1] \) for \( f = (y^2-x^3)(y^2-\lambda x^3) \).}\label{table2}
\end{table}
\[ h_1 := \lambda x^6+((2\alpha -1)\lambda -2\alpha +1)x^3y^2+(4\alpha -1)y^4 \]
\[ h_2 := ((\alpha-1)\lambda^2-2\lambda\alpha+\alpha-1)x^3y^3+((2\alpha-1)\lambda-2\alpha+1)y^5 \]
\[ h_3 := ((\alpha-1)\lambda^2-2\lambda\alpha+\alpha-1)x^6y - (4\alpha-2)y^5 \]
\[ h_4 := x^7y+\lambda x^8 + ((2\alpha-1)\lambda-2\alpha+1)x^5y^2+(4\alpha-1)x^2y^4 \]
\end{example}

\FloatBarrier

\begin{example}
Let \( f = x^3 + y^3 + z^3 + xyz \). Then, the pair \( (\mathbb{A}^3_\mathbb{C}, \textnormal{div}(f)) \) is log-canonical. Therefore, the multiplier ideals are trivial. Hence, the Hodge ideals provide a first non-trivial invariant of the singularity. Since this example is an isolated quasi-homogeneous singularity the Hodge ideals were already determined in \cite{Zha18}.
\def\arraystretch{1.3}
\begin{table}[!ht]
\begin{tabular}{|c|c|c|c|}
\hline
\(\alpha\) & \(I_0(f^\alpha)\) & \(I_1(f^\alpha)\) & \(I_2(f^\alpha)\) \\[1.5pt] \hline
\hline
\(\frac{1}{3}\) & \(R\) & \( (x,y,z) \) & \( (x^2y,x^2z,xy^2,zy^2,yz^2,xz^2,y^3-z^3,x^3-z^3,xyz+3z^3,z^4) \) \\[1.5pt] \hline
\(\frac{2}{3}\) & \(R\) & \( (x,y,z)^2 \) & \( (x, y, z)^4 \) \\[1.5pt] \hline
\(1\)\tiny{(2)}  &  \(R\) & \((z^3,yz^2,xz^2,\) & \((z^5,yz^4,xz^4,y^2z^3,xyz^3,x^2z^3,y^3z^2,xy^2z^2,x^2yz^2,\) \\[1.5pt]
 & & \( xz+3y^2, \) & \( (3\alpha+56)x^3z+y^3z-53\alpha xyz^2-(81\alpha+28)z^4, \) \\[1.5pt]
 & & \( xy+3z^2, \) & \( 27\alpha y^4+18\alpha xy^2z+(3\alpha+56)x^2z^2, \) \\[1.5pt]
 & & \( yz+3x^2) \) & \( 27\alpha xy^3+9\alpha x^2yz+(81\alpha+28)y^2z^2+xz^3, \) \\[1.5pt]
 & & & \( (3\alpha+56)x^2y^2+18\alpha xyz^2+27\alpha z^4, \) \\[1.5pt]
 & & & \( 27\alpha x^3y+9\alpha xy^2z+(81\alpha+28)x^2z^2+yz^3, \) \\[1.5pt]
 & & & \( 27\alpha x^4+18\alpha x^2yz+(3\alpha+56)y^2z^2) \) \\[1.5pt] \hline
\end{tabular}
\jump
\caption{Hodge ideals \( I_p(f^\alpha), p = 0, 1, 2, \alpha \in \mathbb{Q} \cap (0, 1] \) for \( f = x^3 + y^3 + z^3 + xyz \).}\label{table3}
\end{table}
\end{example}

\bibliography{serials_short,references}

\providecommand{\bysame}{\leavevmode\hbox to3em{\hrulefill}\thinspace}
\providecommand{\MR}{\relax\ifhmode\unskip\space\fi MR }
\providecommand{\MRhref}[2]{%
  \href{http://www.ams.org/mathscinet-getitem?mr=#1}{#2}
}
\providecommand{\href}[2]{#2}
\begin{thebibliography}{ALMM09}

\bibitem[ALMM09]{ALMM09}
D.~Andres, V.~Levandovskyy, and J.~Mart\'in-Morales, \emph{Principal intersection and {B}ernstein-{S}ato polynomial of an affine variety}, Proceedings of the 2009 International Symposium on Symbolic and Algebraic Computation, ACM, New York, NY, 2009, pp.~231--238.

\bibitem[Ber72]{Ber72}
J.~Bernstein, \emph{The analytic continuation of generalized functions with respect to a parameter}, Funct. Anal. Appl. \textbf{6} (1972), no.~4, 26--40.

\bibitem[BL10]{BL10}
Ch. Berkesch and A.~Leykin, \emph{Algorithms for {B}ernstein-{S}ato polynomials and multiplier ideals}, Proceedings of the 2010 International Symposium on Symbolic and Algebraic Computation, ACM, New York, NY, 2010, pp.~99--106.

\bibitem[BM02]{BM02}
J.~Briançon and Ph. Maisonobe, \emph{Remarques sur l’idéal de {B}ernstein associé à des polynômes}, Preprint Université de Nice Sophia-Antipolis (2002), no.~650.

\bibitem[BS05]{BS05}
N.~Budur and M.~Saito, \emph{Multiplier ideals, {$V$}-filtration, and spectrum}, J. Algebraic Geom. \textbf{14} (2005), no.~2, 269--282.

\bibitem[DGPS21]{Sing}
W.~Decker, G.-M. Greuel, G.~Pfister, and H.~Schönemann, \emph{\textsc{Singular} \textup{\texttt{4.2.0}} --- {A} computer algebra system for polynomial computations}, Available at \url{http://www.singular.uni-kl.de}, 2021.

\bibitem[DMS19]{CNS19}
A.~Casta{\~n}o Domínguez, L.~Narváez Macarro, and Ch. Sevenheck, \emph{{H}odge ideals of free divisors}, preprint arXiv:1912.09786.

\bibitem[HTT08]{HTT08}
R.~Hotta, K.~Takeuchi, and T.~Tanisaki, \emph{{$D$}-modules, {P}erverse {S}heaves and {R}epresentation {T}heory}, Progr. Math., no. 263, Birkhäuser, Basel, 2008.

\bibitem[JKYS19]{JKYS19}
S.-J. Jung, I.-K. Kim, Y.~Yoon, and M.~Saito, \emph{Hodge ideals and spectrum of isolated hypersurface singularities}, preprint arXiv:1904.02453.

\bibitem[Kas76]{Kas76}
M.~Kashiwara, \emph{${B}$-functions and holonomic systems}, Invent. Math. \textbf{38} (1976), no.~1, 33--53.

\bibitem[Kas83]{Kas83}
\bysame, \emph{Vanishing cycle sheaves and holonomic systems of differential equations}, Algebraic geometry (Tokyo/Kyoto, 1982), Lecture Notes in Math., no. 1016, Springer, Berlin, 1983, pp.~134--142.

\bibitem[Laz04]{Laz04-2}
R.~Lazarsfeld, \emph{Positivity in algebraic geometry. {II}. {P}ositivity for vector bundles, and multiplier ideals}, Ergeb. Math. Grenzgeb. (3), no.~49, Springer, Berlin, 2004.

\bibitem[Lev05]{Lev05}
V.~Levandovskyy, \emph{On preimages of ideals in certain non-commutative algebras}, Computational commutative and non-commutative algebraic geometry (Chisinau, 2005) (S.~Cojocaru, G.~Pfister, and V.~Ufnarovski, eds.), NATO Sci. Ser. III Comput. Syst. Sci., no. 196, IOS, Amsterdam, 2005, pp.~44--62.

\bibitem[Mal83]{Mal83}
B.~Malgrange, \emph{Polynôme de {B}ernstein-{S}ato et cohomologie évanescente}, Analysis and topology on singular spaces, {II}, {III} (Paris), Astérisque, vol. 101-102, Soc. Math. France, 1983, pp.~243--267.

\bibitem[MP19a]{MP19}
M.~Musta\c{t}\v{a} and M.~Popa, \emph{Hodge ideals}, Mem. Amer. Math. Soc. \textbf{262} (2019), no.~1268.

\bibitem[MP19b]{MP19b}
\bysame, \emph{Hodge ideals for {$\mathbb{Q}$}-divisors: birational approach}, J. Éc. polytech. Math. \textbf{6} (2019), 283--328.

\bibitem[MP20a]{MP20a}
\bysame, \emph{Hodge filtration, minimal exponent, and local vanishing}, Invent. Math. \textbf{220} (2020), 453--478.

\bibitem[MP20b]{MP20b}
\bysame, \emph{Hodge ideals for {$\mathbb{Q}$}-divisors, {$V$}-filtration, and minimal exponent}, Forum Math. Sigma \textbf{8} (2020), no.~e19, 41 pp.

\bibitem[PR21]{PR21}
M.~Perlman and C.~Raicu, \emph{{H}odge ideals for the determinant hypersurface}, Selecta Math. (N.S.) \textbf{21} (2021), no.~1.

\bibitem[Sai88]{Sai88}
M.~Saito, \emph{Modules de {H}odge polarisables}, Publ. Res. Inst. Math. Sci. \textbf{24} (1988), no.~6, 849--995.

\bibitem[Sai90]{Sai90}
\bysame, \emph{Mixed {H}odge modules}, Publ. Res. Inst. Math. Sci. \textbf{26} (1990), no.~2, 221--333.

\bibitem[Sai93]{Sai93}
\bysame, \emph{On $b$-function, spectrum and rational singularity}, Math. Ann. \textbf{295} (1993), no.~1, 51--74.

\bibitem[Shi11]{Shi11}
T.~Shibuta, \emph{Algorithms for computing multiplier ideals}, J. Pure Appl. Algebra \textbf{215} (2011), no.~12, 2829--2842.

\bibitem[SST00]{SST00}
M.~Saito, B.~Sturmfels, and N.~Takayama, \emph{Gröbner deformations of hypergeometric differential equations}, Algorithms Comput. Math., no.~6, Springer, Berlin, 2000.

\bibitem[Zha18]{Zha18}
M.~Zhang, \emph{Hodge filtration and {H}odge ideals for {$\mathbb{Q}$}-divisors with weighted homogeneous isolated singularities}, preprint arXiv:1810.06656.

\end{thebibliography}
\bibliographystyle{amsalpha}

\end{document}